\documentclass[a4paper,12pt,reqno]{amsart}
 
\usepackage[margin=2cm]{geometry} 
\usepackage{amsmath,amsthm,amssymb,enumitem}
\usepackage{amsrefs}
\usepackage[ocgcolorlinks,hyperfootnotes=false,colorlinks=true,citecolor=blue,linkcolor=blue,urlcolor=blue]{hyperref}

\newcommand{\fl}{f{\kern0.075em}l}
\newcommand{\norm}[1]{\left\lVert #1 \right\rVert}

\theoremstyle{plain}
\newtheorem{thm}{Theorem}[section]
\newtheorem{lem}[thm]{Lemma}
\newtheorem{prop}[thm]{Proposition}
\newtheorem{cor}[thm]{Corollary}

\theoremstyle{definition}
\newtheorem{defn}[thm]{Definition}
\newtheorem{exmp}[thm]{Example}

\linespread{1.01}
\setlength{\parindent}{0pt}
\setlength{\parskip}{0.75\baselineskip}
\setlist[enumerate]{itemsep=2mm, topsep=0mm}

\title[Asymptotic behaviour of the Bergman invariant and Kobayashi metric]{Asymptotic behaviour of the Bergman invariant and Kobayashi metric on exponentially flat infinite type domains
}
\author{Ravi Shankar Jaiswal}
\address{Centre for Applicable Mathematics, Tata Institute of Fundamental Research, Bangalore 560065, India.}
\email{ravi@tifrbng.res.in}
\date{\today}
\subjclass[2020]{Primary 32A36; 32F45, Secondary 32T27}
\keywords{Bergman canonical invariant, Ricci and Scalar curvatures,  Kobayashi and Kobayashi--Fuks
metrics, d-bar problem, Infinite type.}

\begin{document}

\addtolength{\jot}{2mm}
\addtolength{\abovedisplayskip}{1mm}
\addtolength{\belowdisplayskip}{1mm}

\maketitle
\begin{abstract}
We prove the nontangential asymptotic limits of the Bergman canonical invariant, Ricci and Scalar curvatures of the Bergman metric, as well as the Kobayashi--Fuks metric, at exponentially flat infinite type boundary points of smooth bounded pseudoconvex domains in $\mathbb{C}^{n + 1}, \, n \in \mathbb{N}$. Additionally, we establish the nontangential asymptotic limit of the Kobayashi metric at exponentially flat infinite type boundary points of smooth bounded domains in $\mathbb{C}^{n + 1}, \, n \in \mathbb{N}$. We first show that these objects satisfy appropriate localizations and then utilize the method of scaling to complete the proofs.
\end{abstract}
\section{Introduction}\label{Intro}
The Bergman canonical invariant, Ricci and Scalar curvatures of the Bergman metric, and the Kobayashi and Kobayashi--Fuks metrics are biholomorphic invariants. Studying the boundary behaviour of biholomorphic invariant objects is crucial in complex geometry, as it helps analyze the regularity and extendability of holomorphic mappings near the boundary of domains.

Let $D \subset \mathbb{C}^n$ be a domain. Consider the Dirichlet problem for the complex Monge-Ampère equation,
\begin{align}\label{M-A align}
\begin{cases}\vspace{3mm}
\mathcal{J}(u) := (-1)^n \operatorname{det}\begin{pmatrix}
u & u_{\bar{k}}\\
u_j & u_{j\bar{k}}
\end{pmatrix}_{1 \leq j, k \leq n}
&= 1 \quad \text{in} \, \, D, \text{ and}\\
\quad \quad \quad \quad \quad \quad \quad \quad \quad \quad u &= 0 \quad \text{on} \, \, bD.
\end{cases}
\end{align}
In the case of the unit ball $B_n(0, 1)$ in 
$\mathbb{C}^n$, we see that the Bergman kernel on
the diagonal is given by $\kappa_{B_n(0, 1)}(z) = (1 - |z|^2)^{-(n + 1)}/c_{n}$ while $u(z) = 1 - |z|^2$ is a solution of \eqref{M-A align}. For strongly pseudoconvex domain $D$, Hörmander \cite{Hörmander 1965} shows that $\mathcal{J}\left((c_n \kappa_D)^{-1/(n + 1)}\right) = 1$ on $bD$.
In 1976, Fefferman \cite{Fefferman 1976} posed the following question: how closely does the Bergman kernel on the diagonal $\kappa_D$ approximate $c/u^{n + 1}$, where $u$ is a solution of \eqref{M-A align}, and $c$ is a positive constant? Alternatively, is $\mathcal{J}\left(c \, (\kappa_{D})^{-1/(n + 1)}\right)$ nearly equal to $1$? More precisely, does $\mathcal{J}\left(c \, (\kappa_{D})^{-1/(n + 1)}\right)$ asymptotically approach $1$ on $bD$? 
Interestingly, Ramadanov \cite{Ramadanov 1984} showed that
\begin{align}\label{2}
    \mathcal{J}\left[\kappa_D^{-\frac{1}{n + 1}}\right] = (n + 1)^{-n}J_D,
\end{align}
where $J_D$ is the Bergman canonical invariant.
Therefore, by utilizing \eqref{2}, it becomes interesting to investigate the boundary behaviour of the Bergman canonical invariant $J_D$.

The Bergman canonical invariant $J_D$ was initially introduced by Bergman \cite{Bergman 1970} for defining a general Kähler metric. 
It was Bergman \cite{Bergman 1970} who initiated the study of the boundary behaviour of this invariant on certain special classes of domains in $\mathbb{C}^2$ (strongly pseudoconvex domains, domains with some star symmetries, etc.) and found the precise nontangential limits. For a smooth strongly pseudoconvex domain $D$, $\lim_{z \to bD}J_{D}(z) = (n + 1)^n \pi^n / n!$ (see Hachaichi \cite{Hachaichi} and Krantz-Yu \cite{Krantz-Yu 1996}). Krantz and Yu \cite{Krantz-Yu 1996} showed the nontangential boundary limits of this invariant on a large class of weakly pseudoconvex bounded domains in $\mathbb{C}^{n}$ of finite type in the sense of D'Angelo \cite{D'Angelo 1982}.

Kobayashi \cite{Kobayashi} demonstrated that the Ricci curvature of the Bergman metric is bounded above by $n + 1$ for complex manifolds possessing nontrivial Bergman kernel and metric. This result was independently verified by Fuks \cite{Fuks} for bounded domains in $\mathbb{C}^n$, with the sharpness of the upper bound noted by Nazarjan \cite{Nazarjan}. Fuks \cite{Fuks} also studied the boundary behaviour of the Ricci curvature of the Bergman metric, particularly for certain strongly pseudoconvex domains in $\mathbb{C}^2$. Krantz and Yu \cite{Krantz-Yu 1996} established the nontangential boundary limits of the Ricci and Scalar curvatures of the Bergman metric on a large class of weakly pseudoconvex bounded domains in $\mathbb{C}^{n}$ of D'Angelo finite type.

Given that the Ricci curvature of the Bergman metric on a bounded domain in $\mathbb{C}^n$ is strictly bounded above by $n + 1$, the Kobayashi--Fuks metric, introduced in Subsection \ref{2.2}, comes into focus. Borah and Kar \cite{Kar} investigated its boundary behaviour on strongly pseudoconvex domains in $\mathbb{C}^n$. Additionally, Kar \cite{Kar 2023} extended the analysis to explore the nontangential boundary limits of the Kobayashi--Fuks metric on a large class of weakly pseudoconvex bounded domains in $\mathbb{C}^n$ of D'Angelo finite type.

The nontangential asymptotic boundary behaviour of the Kobayashi metric for strongly pseudoconvex domains was initially analyzed by Graham \cite{Graham 1975} in 1975. Subsequently, in 1995, J. Yu \cite{Yu 1995 2} investigated the nontangential asymptotic behaviour of the Kobayashi metric on a broad class of weakly pseudoconvex bounded domains in $\mathbb{C}^{n}$ of D'Angelo finite type.

The main objective of this article is to establish the asymptotic behaviour of the Bergman canonical invariant, Ricci and Scalar curvatures of the Bergman metric, and the Kobayashi and Kobayashi--Fuks metrics at an infinite type boundary point of a smooth bounded domain in $\mathbb{C}^{n + 1}, \, n \in \mathbb{N}$.

Let $D \subset \mathbb{C}^{n + 1}$ be a bounded smooth domain with $0 \in bD$. The boundary point
$0$ is said to be \emph{exponentially flat} if there exists a local defining function of D near the origin of the form
\begin{align}\label{1}
    \rho(z_1, \dots, z_{n + 1}) = \operatorname{Re}z_1 + \phi \left(|z_2|^2 + \dots + |z_{n + 1}|^2\right),
\end{align}
 where $\phi : \mathbb{R} \to \mathbb{R}$ is a smooth function satisfying the Definition \ref{exp flat fun}.
\begin{defn}\label{exp flat fun}
    A smooth function $\phi: \mathbb{R} \to \mathbb{R}$ is said to be
    \emph{exponentially flat near the origin}, if it satisfies the following properties:
    \begin{enumerate}
    \item $\phi(x) = 0,$ for $x \leq 0$, 
    \item $\phi^{(m)}(0) = 0$, for $m \in \mathbb{N}$,
    \item there exists $\epsilon_0 > 0$ such that $\phi''(x) > 0$, for $0 < x < \epsilon_0$,
    \item the function $-1/\operatorname{log}\phi(x)$ extends to a smooth function on $[0,\infty)$ and vanishes to a finite order $m$ at $0$, and
    \item \begin{align}\label{Scaling}
        \lim_{r \to 0^{+}}\frac{\phi(rx)}{\phi(r)} = \begin{cases} 
      0,  &\text{if } 0 < x < 1, \text{ and}\\
      \infty,  & \text{if } x > 1.
      \end{cases}
    \end{align}
\end{enumerate}
\end{defn}
\begin{exmp}
For $m \in \mathbb{N}$,
\begin{align}
    \phi(x) = \begin{cases} 
      0,  &\text{if } x \leq 0, \text{ and}\\
      \operatorname{exp}\left(-1/x^m\right),  & \text{if } x > 0.
      \end{cases}
\end{align}
is exponentially flat near the origin.
\end{exmp}
We will now introduce the essential definitions and notations needed to state our results. We utilize the following generalized cones for nontangential approach. For $\alpha, N > 0$, we define
\begin{align}
    C_{\alpha,N} := \Big\{\big(z_1,z'\big) \in \mathbb{C}\times\mathbb{C}^{n} : \operatorname{Re}z_1 < -\alpha|z'|^N\Big\},
\end{align}
where $z' = (z_2, \dots, z_{n + 1})$. A smooth curve $q: (0, \epsilon_0) \to D \cap C_{\alpha, N}$ is said to be $(\alpha, N)$-cone type curve approaching $0 \in bD$ if
\begin{align*}
    \lim_{t \to 0^+} q(t) = (0, \dots, 0),
\end{align*}
for some $\epsilon_0 > 0$.
Let $\xi \in \mathbb{C}^{n + 1}$. We denote the complex normal and complex tangential components of $\xi$ with respect to $\pi(q(t))$ as $\xi_{N,t}$ and $\xi_{T,t}$, respectively. Here, $\pi: \mathbb{C}^n \to \partial D$ represents the normal projection onto $\partial D$ for points close to $\partial D$. Therefore, $\xi$ can be expressed as the sum $\xi = \xi_{N,t} + \xi_{T,t}$, where $\xi_{T,t} \in T_{\pi(q(t))}^{\mathbb{C}}(\partial D)$ and $\xi_{N,t}$ is orthogonal to $T_{\pi(q(t))}^{\mathbb{C}}(\partial D)$. Let $d(t)$ and $d^*(t)$ denote the Euclidean distances from $q(t)$ to $\partial D$ along the normal and tangential directions, respectively.

We pove the boundary behaviour of the Bergman canonical invariant $J_D$, Ricci curvature $R_D$ and Scalar curvature $S_D$ on exponentially flat infinite type pseudoconvex domains in the following theorem.
\begin{thm}\label{curvatures}
   Let $D \subset \mathbb{C}^{n + 1}$ be a smooth bounded pseudoconvex domain with $0 \in bD$. If $0$ is an exponentially flat boundary point and $q$ is an $(\alpha, N)$-cone type curve approaching $0$. Then, for $\xi \in \mathbb{C}^{n + 1} \setminus \{0\}$, we have
    \begin{enumerate}[labelsep=10mm,leftmargin=20mm,itemsep=5mm]
    \item $\begin{aligned}[t]
    \lim_{t \to 0^{+}} {J_{D}(q(t))} = \frac{2\pi^{n + 1} (n + 1)^n}{n!},
    \end{aligned}
    $
    \item $\begin{aligned}[t]
        \lim_{t \to 0^{+}} {R_{D}(q(t) ; \xi)} = -1, \text{ and}
    \end{aligned}$
    \item 
    $\begin{aligned}[t]
    \lim_{t \to 0^{+}} {S_{D}(q(t))} = -(n + 1).
    \end{aligned}$
    \end{enumerate}
\end{thm}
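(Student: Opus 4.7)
\textbf{Proof proposal for Theorem \ref{curvatures}.}

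The plan is to combine the localization statements (established earlier in the paper) with an anisotropic biholomorphic scaling that identifies the model at the boundary point as the product $\Delta \times \mathbb{B}^n$ of the unit disk and the unit ball in $\mathbb{C}^n$. The three limit values in the statement are precisely $J$, $R(\,\cdot\,;\xi)$, and $S$ of this product evaluated at an interior point.

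After localizing to a neighborhood $U$ of $0$ on which $\rho = \operatorname{Re} z_1 + \phi(|z'|^2)$, I would introduce the biholomorphisms
\[
F_t(z_1, z') = \Big(\, C\big((z_1 - i\operatorname{Im} z_1(t))/d(t)\big),\ z'/\sqrt{r(t)}\,\Big),
\]
where $d(t)$ is the Euclidean distance from $q(t)$ to $bD$, $r(t) := \phi^{-1}(d(t))$, and $C(\zeta) = (1 + \zeta)/(1 - \zeta)$ is the Cayley map carrying $\{\operatorname{Re}\zeta < 0\}$ biholomorphically onto $\Delta$ with $C(-1) = 0$. Substituting into $\rho$ and invoking the scaling property~\eqref{Scaling} of $\phi$, the ratio $\phi(r(t)|w'|^2)/\phi(r(t))$ tends to $0$ for $|w'|<1$ and to $\infty$ for $|w'|>1$, so $F_t(D \cap U)$ exhausts $\Delta \times \mathbb{B}^n$. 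The $(\alpha,N)$-cone condition together with the infinite-order vanishing of $\phi$ at $0$ gives $|z'(t)|/\sqrt{r(t)} \to 0$ and $\operatorname{Re} z_1(t)/d(t) \to -1$, whence $F_t(q(t)) \to (0, 0) \in \Delta \times \mathbb{B}^n$.

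The next step is a Ramadanov-type stability: I would show that $\kappa_{F_t(D \cap U)}$, together with its first and second derivatives, converges locally uniformly on $\Delta \times \mathbb{B}^n$ to the corresponding objects on $\Delta \times \mathbb{B}^n$. Combined with the biholomorphic invariance
$J_{D \cap U}(q(t)) = J_{F_t(D \cap U)}(F_t(q(t)))$ and its analogues for $R$ and $S$, this reduces the problem to computing these invariants on $\Delta \times \mathbb{B}^n$ at $(0,0)$. Since the Bergman kernel multiplies across products,
\[
J_{\Delta \times \mathbb{B}^n}(0,0) = J_{\Delta}(0)\cdot J_{\mathbb{B}^n}(0) = 2\pi \cdot \frac{(n+1)^n \pi^n}{n!} = \frac{2\pi^{n+1}(n+1)^n}{n!}.
\]
Both $\Delta$ and $\mathbb{B}^n$ are K\"ahler--Einstein with $\operatorname{Ric} = -g$; taking direct sums of Bergman metrics shows $\operatorname{Ric} = -g$ on the product as well, so $R(\,\cdot\,;\xi) \equiv -1$ on every nonzero tangent vector and $S \equiv -(\dim_{\mathbb{C}} = -(n+1))$.

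The main obstacle I anticipate is the simultaneous stability of $\kappa_{F_t(D \cap U)}$ and its second derivatives on the family of rescaled domains: the boundaries of $F_t(D \cap U)$ are level sets of $\phi(r(t)|w'|^2)/\phi(r(t))$ that degenerate pointwise rather than smoothly, so the usual normal-family argument must be supplemented by uniform $L^2$ estimates on holomorphic functions on a collapsing family of pseudoconvex domains. Equally, keeping $F_t(q(t))$ inside a fixed compact subset of $\Delta \times \mathbb{B}^n$ uses the nontangential cone hypothesis through the comparison $d(t)^{1/N} \ll \sqrt{r(t)}$, which in turn depends on the infinite-order flatness of $\phi$. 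These interactions -- cone-type approach, localization, and exponential flatness -- must be deployed together, and this is where the technical heart of the proof lies.
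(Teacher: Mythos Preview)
Your overall architecture---localize near $0$, rescale anisotropically so that the model domain is $\mathbb{D}\times B_n(0,1)$, and read off the values of $J$, $R$, $S$ on the product---is exactly the paper's strategy, and your computations on the model are correct.  The substantive difference, and the gap in your proposal, is the mechanism by which the rescaled invariants are shown to converge to those of the model.

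You propose a Ramadanov-type stability for $\kappa_{F_t(D\cap U)}$ \emph{together with its first and second derivatives}, and you correctly flag this as the main obstacle: the boundaries of the rescaled domains degenerate only pointwise, not in any $C^k$ sense, and Ramadanov-type theorems typically need monotone exhaustion and do not automatically yield control on derivatives of the kernel.  You do not resolve this, and it is precisely this difficulty that the paper \emph{sidesteps} rather than confronts.  The paper never proves convergence of the Bergman kernel's derivatives.  Instead it invokes the Krantz--Yu representations (Proposition~\ref{Pro 2.1}) expressing $J_D$, $R_D$, $S_D$ in terms of the extremal quantities $\lambda_D$, $M_D$, $N_D$, which are \emph{monotone} under domain inclusion (Proposition~\ref{Krantz-Yu}).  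The localization lemma (Lemma~\ref{localisation lemma 4}), proved via H\"ormander's $\bar\partial$-estimates with a weight singular at the base point, localizes these extremal functions rather than the kernel itself.  Then the two-sided inclusion of Lemma~\ref{ScalingLemma2},
\[
(1-\delta)\,\mathbb{D}\times B_n(0,1)\ \subset\ f\circ\Sigma(D_t^\epsilon)\ \subset\ \mathbb{D}\times B_n\bigl(0, d_2^\epsilon(t)/d^*(t)\bigr),
\]
squeezes each monotone extremal quantity between its values on two explicit product domains, with $d_2^\epsilon(t)/d^*(t)\to 1$ by Lemma~\ref{3.1}.  No normal-family or kernel-stability argument is needed.

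In short: your scaling and model identification are right, but the route through direct kernel convergence is the hard road; the paper's key technical device is to replace $J$, $R$, $S$ by the monotone extremal functions $\lambda$, $M$, $N$, which turns the problem into a straightforward squeeze.  If you revise, switch to that representation and use the domain sandwich rather than attempting Ramadanov-type stability.
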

We prove the boundary behaviour of the Kobayashi--Fuks metric, denoted by $M_D^{F}$, on exponentially flat infinite type pseudoconvex domains in the following theorem.
\begin{thm}\label{1.4}
   Let $D \subset \mathbb{C}^{n + 1}$ be a smooth bounded pseudoconvex domain with $0 \in bD$. If $0$ is an exponentially flat boundary point and $q$ is an $(\alpha, N)$-cone type curve approaching $0$. Then, for $\xi \in \mathbb{C}^{n + 1} \setminus \{0\}$, we have 
    \begin{align}
        \lim_{t \to 0^{+}}\frac{M_D^{F}(q(t); \xi)}{{\sqrt{{|\xi_{N,t}|^2}/{2d(t)^2} + (n+1) {|\xi_{T,t}|^2}/{d^*(t)^2}}}} = {(n + 3)^{1/2}}.
    \end{align}
\end{thm}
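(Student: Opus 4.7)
The plan is to split $M_D^F$ into a Bergman-metric factor and a Ricci factor, using Theorem~\ref{curvatures}(2) for the latter and an independent scaling argument for the former. From the definition of the Kobayashi--Fuks metric recalled in Subsection~\ref{2.2}, one has the pointwise identity
\[
M_D^F(z;\xi)^2 \;=\; \bigl((n+2)-R_D(z;\xi)\bigr)\,B_D(z;\xi)^2,
\]
where $B_D$ is the Bergman metric and $R_D(z;\xi)$ the normalized Ricci curvature of the Bergman metric. Along the curve $q$, Theorem~\ref{curvatures}(2) gives $R_D(q(t);\xi)\to -1$, so the first factor tends to $n+3$, and Theorem~\ref{1.4} reduces to the Bergman-metric asymptotic
\[
\lim_{t\to 0^{+}}\frac{B_D(q(t);\xi)^2}{|\xi_{N,t}|^2/(2\,d(t)^2)+(n+1)\,|\xi_{T,t}|^2/d^*(t)^2}\;=\;1.
\]

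For this Bergman asymptotic I would follow the localization-plus-scaling program underlying the rest of the paper. After establishing $B_D(q(t);\xi)/B_{D\cap U}(q(t);\xi)\to 1$ on a neighborhood $U$ of $0$ with defining function $\rho(z)=\operatorname{Re}z_1+\phi(|z'|^2)$, introduce for each small $t$ the anisotropic dilation
\[
T_t(z_1,z')=\Bigl(\tfrac{z_1-i\operatorname{Im}q_1(t)}{d(t)},\;\tfrac{\mathcal U_t z'}{d^*(t)}\Bigr),\qquad\mathcal U_t\in U(n),
\]
with $\mathcal U_t$ a unitary rotation chosen so that $T_t(q(t))\to(-1,0)$; this is possible because the $(\alpha,N)$-cone assumption forces $|q'(t)|/d^*(t)\to 0$. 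The rescaled defining function is $\operatorname{Re}w_1+d(t)^{-1}\phi(d^*(t)^2|w'|^2)$, and the Heaviside-type scaling property \eqref{Scaling} of Definition~\ref{exp flat fun} forces the second summand to $0$ for $|w'|<1$ and to $+\infty$ for $|w'|>1$ as $t\to 0^+$. Hence $T_t(D\cap U)$ converges, in the Carath\'eodory kernel (local Hausdorff) sense, to the product model
\[
D_\infty \;=\;\mathbb{H}\times\mathbb{B}^n,\qquad \mathbb{H}=\{\operatorname{Re}w_1<0\},\quad\mathbb{B}^n=\{w'\in\mathbb{C}^n:\,|w'|<1\}.
\]
Stability of the Bergman kernel under this Carath\'eodory convergence (via H\"ormander's $L^2$-theory for $\bar\partial$) upgrades to stability of its first logarithmic derivatives, giving $B_{T_t(D\cap U)}^2\to B_{D_\infty}^2$ uniformly on compact subsets of $D_\infty$.

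A direct product computation gives $B_{D_\infty}^2\bigl((-1,0);(\eta_1,\eta')\bigr)=|\eta_1|^2/2+(n+1)|\eta'|^2$. Combined with biholomorphic invariance $B_D(q(t);\xi)=B_{T_t(D\cap U)}(T_t(q(t));dT_t(\xi))$ and the pushforward $dT_t(\xi)=(\xi_{N,t}/d(t),(\mathcal U_t\xi_{T,t})/d^*(t))$, this yields the Bergman-metric limit, and combining with the Ricci limit from Theorem~\ref{curvatures}(2) completes the proof. The main technical obstacle is the stability step: unlike the D'Angelo finite type setting of Krantz--Yu and Kar, property \eqref{Scaling} provides no subelliptic gain, so convergence of the scaled Bergman kernels cannot be read off from subelliptic $\bar\partial$-Neumann estimates and must be argued directly from H\"ormander's weighted $L^2$-estimates, with the quantitative rate controlled by property~(4) of Definition~\ref{exp flat fun}. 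This technical core is the same one required for Theorem~\ref{curvatures}, so once it is in place the proof of Theorem~\ref{1.4} amounts to the identity in the first paragraph combined with Theorem~\ref{curvatures}(2).
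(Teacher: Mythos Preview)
Your approach is correct and coincides with the paper's: both factor $M_D^F(q(t);\xi)^2=\bigl((n+2)-R_D(q(t);\xi)\bigr)B_D(q(t);\xi)^2$, invoke Theorem~\ref{curvatures}(2) for the Ricci factor, and reduce to the Bergman-metric asymptotic
\[
\lim_{t\to 0^+}\frac{B_D(q(t);\xi)^2}{|\xi_{N,t}|^2/(2d(t)^2)+(n+1)|\xi_{T,t}|^2/d^*(t)^2}=1.
\]
The paper's proof is the single line ``follows from Theorem~\ref{curvatures} and \cite{Ravi}*{Theorem~1.4}'', the latter being exactly this Bergman-metric limit, already established in the companion paper by the same localization--scaling machinery (Lemmas~\ref{3.1}, \ref{ScalingLemma2}) that drives Theorem~\ref{curvatures}.

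One remark on your sketch of the Bergman part: your simplified dilation $T_t$ omits the translation to the nearest boundary point $p(t)$ and the rotation $R_t^2$ aligning $\nabla\rho(p(t))$ with $e_1$. As written, $dT_t(\xi)=(\xi_1/d(t),\mathcal U_t\xi'/d^*(t))$, not $(\xi_{N,t}/d(t),\ldots)$, so the decomposition you obtain is with respect to the normal at $0$, not at $\pi(q(t))$ as in the statement. This is harmless in the limit because $R_t^2\circ R_t^1\to I$ (Lemma~\ref{3.1} gives $p_2(t)\phi'(p_2(t)^2)/d(t)\to 0$, so $|\xi_{N,t}|-|\xi_1|=o(d(t))$), but the paper's more elaborate chain $\gamma_t\circ R_t^1\circ T_t^1$ is what makes the base point land exactly at $(-d(t),0)$ and the derivative match $(\xi_{N,t},\xi_{T,t})$ on the nose, which is cleaner when combining with the model computation on $\mathbb{D}\times B_n(0,1)$.
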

We also prove the boundary behaviour of the Kobayashi metric, denoted by $M_D^{K}$, on exponentially flat infinite type domains in the following theorem.
\begin{thm}\label{1.5}
     Let $D \subset \mathbb{C}^{n + 1}$ be a smooth bounded pseudoconvex domain with $0 \in bD$. If $0$ is an exponentially flat boundary point and $q$ is an $(\alpha, N)$-cone type curve approaching $0$. Then, for $\xi \in \mathbb{C}^{n + 1} \setminus \{0\}$, we have
     \begin{align}
          \lim_{t \to 0^+} \frac{M_D^K(q(t); \xi)}{\operatorname{max}\left\{{|{{\xi}}_{N,t}|}/{2d(t)},{|{{\xi}}_{T,t}|}/{d^*(t)}\right\}} = 1.
     \end{align}
\end{thm}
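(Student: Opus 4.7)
\emph{Proof plan.} The strategy follows the two-step paradigm signposted in the abstract: first show that the Kobayashi metric $M_D^K$ localizes near the exponentially flat boundary point $0$, then apply an anisotropic scaling adapted to the geometry induced by $\phi$ and identify the limit model on which the Kobayashi metric can be computed. The main tools are the decreasing property of $M^K$ under inclusion, its biholomorphic invariance, and its stability under appropriate convergence of domains.

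For the localization, $M_D^K(q(t);\xi) \le M_{D \cap U}^K(q(t);\xi)$ is immediate. For the reverse asymptotic estimate, I would use a Sibony-type argument for bounded pseudoconvex domains: construct a bounded plurisubharmonic peak/barrier function at $0$ (feasible since $\rho$ is smooth and plurisubharmonic near $0$) and use it to obtain $M_D^K(q(t);\xi) \ge (1 + o(1))\,M_{D\cap U}^K(q(t);\xi)$ as $t\to 0^+$. This reduces the proof to the local problem on a fixed small neighborhood $U$ of $0$.

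Next I would construct the scaling. Let $r(t)>0$ be the unique small positive solution of $\phi(r(t)^2) = d(t)$; this $r(t)$ is asymptotic to $d^*(t)$. Define the affine biholomorphism
\begin{align*}
T_t(z_1, z') \;=\; \Bigl(\tfrac{z_1 - i\,\operatorname{Im} q_1(t)}{d(t)},\ \tfrac{z' - q'(t)}{r(t)}\Bigr),
\end{align*}
so that $T_t(q(t))$ tends to the base point $p_\infty = (-1,0)$. The $(\alpha,N)$-cone condition forces $|q'(t)| = O(d(t)^{1/N})$, which decays much faster than $r(t)$ (of logarithmic-inverse order in $d(t)$), so the tangential coordinate of $T_t(q(t))$ indeed vanishes in the limit. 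Writing the transformed defining function as $(1/d(t))\,\rho\circ T_t^{-1}(w_1,w')$ and invoking the scaling property \eqref{Scaling}, one checks that it converges locally uniformly off $\{|w'|=1\}$ to $\operatorname{Re}w_1$ on $\{|w'|<1\}$ and to $+\infty$ on $\{|w'|>1\}$. Consequently, $T_t(D\cap U)$ converges in the local Hausdorff sense to the product model
\begin{align*}
D_\infty \;=\; \{w\in \mathbb{C}^{n+1} : \operatorname{Re}w_1 < 0\}\;\times\;B_n(0,1).
\end{align*}

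To conclude, by biholomorphic invariance and the stability of the Kobayashi metric under convergence of pseudoconvex domains (in the spirit of Pang and Graham), $M_{T_t(D\cap U)}^K(T_t(q(t));\,dT_t(\xi))$ converges to $M_{D_\infty}^K(p_\infty;\xi_\infty)$, where $\xi_\infty$ is the limiting pushforward vector. The product formula yields $M_{D_\infty}^K(p_\infty;\xi_\infty) = \max\bigl\{|(\xi_\infty)_1|/2,\,|(\xi_\infty)'|\bigr\}$, since on the half-plane $M^K$ at $-1$ in a direction $v$ equals $|v|/2$, while on $B_n$ at $0$ in a direction $w$ it equals $|w|$. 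Because $dT_t$ scales the normal component by $1/d(t)$ and the tangential component by $1/r(t)\sim 1/d^*(t)$, this produces exactly the denominator in the statement and the limit $1$. The main obstacle is the scaling step itself: since $r(t)$ is of non-polynomial order in $d(t)$, the standard finite-type scaling theorems do not apply, and property \eqref{Scaling} must be leveraged carefully to upgrade pointwise convergence of defining functions to the mode of convergence needed for Kobayashi metrics to pass through; as always in such arguments, the lower-semicontinuity half of the stability statement is the more delicate one.
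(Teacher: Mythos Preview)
Your two-step strategy (localize, then scale) matches the paper's, but the execution differs in both steps, and in the scaling step the difference matters.

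For localization the paper does not use a Sibony-type plurisubharmonic barrier. It applies the Royden--Graham estimate $M^K_{U}(z;\xi)\le \operatorname{coth} c_D(z,D\setminus U)\,M^K_D(z;\xi)$ in two stages: first a Montel argument to pass from $D$ to $D\cap W_t$, then the explicit \emph{holomorphic} peak function $h_\epsilon(z)=\exp\bigl(-(-z_1)^{1/(1+\epsilon)}\bigr)$ to bound the Carath\'eodory distance from $(-d(t),0)$ to the complement of the truncated domain $D_t^\epsilon$. Before this, unitary rotations $R_t^1,R_t^2$ move the base point to $(-d(t),0)$ with normal along $e_1$, which is what makes $|\xi_{N,t}|,|\xi_{T,t}|$ appear exactly (not merely asymptotically) in the denominator.

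The substantive divergence is in the scaling. You propose Hausdorff convergence of domains plus a stability theorem for $M^K$; the paper does something more elementary. It proves a \emph{sandwich of inclusions}
\[
(1-\delta)\bigl(\mathbb{D}\times B_n(0,1)\bigr)\ \subset\ f\circ\Sigma(D_t^\epsilon)\ \subset\ \mathbb{D}\times B_n\bigl(0,\,d_2^\epsilon(t)/d^*(t)\bigr),
\]
and then uses only monotonicity of $M^K$ under inclusion, the product formula on $\mathbb{D}\times B_n(0,1)$, and the estimate $\liminf_{t\to 0^+} d^*(t)/d_2^\epsilon(t)\ge (1+\epsilon)^{-1/m}$. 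This sidesteps entirely the issue you correctly flag as ``the main obstacle'': no convergence theorem for $M^K$ is invoked at all. Your route is plausible in outline, but the stability statement you would need---lower semicontinuity of $M^K$ when the rescaled defining function blows up on $\{|w'|=1\}$---is not off-the-shelf, and supplying it amounts to proving the sandwich lemma by other means. As written, that step is a gap.
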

The above result, in $n = 1$, is stated in Lee \cite{Sunhong 2001}. However, \cite{Sunhong 2001}*{equation $(14)$}
is not true and it is used crucially in their proofs. We have explicitly highlighted this error in an earlier article (see \cite{Ravi}). Our results provide an alternate proof for some of the results in Lee \cite{Sunhong 2001}.

We now state the boundary limits of Kobayashi and Kobayashi--Fuks metrics in the normal direction at exponentially flat infinite type boundary points of domains in $\mathbb{C}^{n + 1}, \, n \in \mathbb{N}$.
\begin{cor} Let $D \subset \mathbb{C}^{n + 1}$ be a smooth bounded domain with $0 \in bD$. If $0$ is an exponentially flat boundary point. Then, for $\xi \in \mathbb{C}^{n + 1} \setminus \{0\}$, we have
 \begin{enumerate}[labelsep=2mm,leftmargin=10mm,itemsep=5mm]
    \item \quad \quad \quad \quad \quad \quad \, 
    $\begin{aligned}
          \lim_{t \to 0^+} \frac{M_D^K((-t, 0); \xi)}{\operatorname{max}\left\{{|{{\xi}}_{1}|}/{2d(t)},{|{{\xi'}}|}/{[\phi^{-1}(t)]^{1/2}}\right\}} = 1.
    \end{aligned}
    $
    \item Further assume that $D$ is
    pseudonvex domain. Then, for $\xi \in \mathbb{C}^{n + 1} \setminus \{0\}$, we have
    \begin{align*}
        \lim_{t \to 0^{+}}\frac{M_D^{F}((-t, 0); \xi)}{{\sqrt{{|\xi_{1}|^2}/{2d(t)^2} + (n+1) {|\xi'|^2}/{\phi^{-1}(t)}}}} = {(n + 3)^{1/2}}.
    \end{align*}
    \end{enumerate}  
\end{cor}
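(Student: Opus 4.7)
The plan is to deduce both statements by applying Theorems \ref{1.5} and \ref{1.4} respectively to the inward normal curve $q(t) = (-t, 0)$, after computing the relevant geometric data for this specific curve.

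First I would verify that $q(t) = (-t, 0)$ qualifies as an $(\alpha, N)$-cone type curve approaching $0$: since the tangential component of $q(t)$ is identically zero, the inequality $\operatorname{Re} z_1 = -t < 0 = -\alpha |z'|^N$ holds for every $\alpha, N > 0$, and $q(t) \to 0$ as $t \to 0^+$. Next I would identify the normal projection $\pi(q(t))$ and the corresponding splitting of $\xi$. Because $\phi^{(m)}(0) = 0$ for every $m \in \mathbb{N}$, the real gradient $\nabla \rho(0)$ points purely along the $+\operatorname{Re} z_1$ direction, so the inward normal ray at $0$ coincides with $\{(-s, 0) : s > 0\}$. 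By the smoothness of $bD$ and the implicit function theorem, the origin is the nearest boundary point to $(-t, 0)$ for all small $t > 0$, so $\pi(q(t)) = 0$. Relative to this boundary point, the complex normal line is spanned by $\partial/\partial z_1$ and the complex tangent space by $\{\partial/\partial z_j\}_{j \ge 2}$; writing $\xi = (\xi_1, \xi')$ then yields $\xi_{N,t} = (\xi_1, 0)$ and $\xi_{T,t} = (0, \xi')$, hence $|\xi_{N,t}| = |\xi_1|$ and $|\xi_{T,t}| = |\xi'|$.

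It then remains to compute the two distances. Clearly $d(t) = t$. For the tangential distance, a point of the form $(-t, z')$ obtained by moving from $q(t)$ in complex-tangential directions lies on $bD$ precisely when $\phi(|z'|^2) = t$, i.e.\ when $|z'|^2 = \phi^{-1}(t)$; thus $d^*(t) = [\phi^{-1}(t)]^{1/2}$. Substituting these four quantities into Theorem \ref{1.5} produces assertion (1), and into Theorem \ref{1.4} produces assertion (2).

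Since the corollary is essentially a direct specialization of the preceding theorems to a specific curve, I expect no substantive obstacle. The only point that genuinely needs checking is that $\pi(q(t))$ coincides with the origin for all sufficiently small $t > 0$, as this is what legitimizes computing the tangential/normal decomposition of $\xi$ with respect to the limiting boundary point. That check in turn rests on property (2) of Definition \ref{exp flat fun}, which forces $\nabla \rho(0)$ to align exactly with the $\operatorname{Re} z_1$-axis and hence makes $(-t, 0)$ literally normal to $bD$ at $0$.
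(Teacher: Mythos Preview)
Your proposal is correct and matches the paper's treatment: the paper states this corollary without proof, implicitly regarding it as the specialization of Theorems \ref{1.5} and \ref{1.4} to the normal curve $q(t)=(-t,0)$, which is precisely what you carry out. Your verifications that $\pi(q(t))=0$, $d(t)=t$, $|\xi_{N,t}|=|\xi_1|$, $|\xi_{T,t}|=|\xi'|$, and $d^*(t)=[\phi^{-1}(t)]^{1/2}$ are all accurate (indeed, for this curve the maps $T^1_t,R^1_t,T^2_t,R^2_t$ from Section \ref{Cone} are all the identity, confirming your computations).
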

The structure of the article is as follows: Section \ref{Prelim} focuses on definitions, known results, and preliminary findings crucial for proving our main results. Section \ref{Cone} is subdivided into two subsections. In the first subsection, we establish the boundary limits of the Bergman canonical invariant, Ricci and Scalar curvatures of the Bergman metric, as well as the Kobayashi--Fuks metric. The second subsection is dedicated to proving the boundary limit of the Kobayashi metric.
\section{Preliminaries}\label{Prelim}
\subsection{Bergman and Kobayashi--Fuks metrics}\label{2.2}
Consider a bounded domain $D \subset \mathbb{C}^n$. 
The Bergman space of $D$, denoted by $A^2(D)$, consists of holomorphic functions $f: D \to \mathbb{C}$ that belong to $L^2(D)$. Since $A^2(D)$ is a closed subspace of $L^2(D)$, there exists an orthogonal projection from $L^2(D)$ onto $A^2(D)$, known as the Bergman projection of $D$, denoted by $P_{D}$.
This projection can be expressed by an integral kernel called the Bergman kernel and denoted by $K_D: D \times D \to \mathbb{C}$. That is,
\begin{align}
    P_{D}f(z) = \int_{D}K_{D}(z, w)f(w) \, \mathrm{d}V(w) \quad \text{for all } f \in L^2(D), z \in D.
\end{align}

The Bergman kernel possesses the following properties:
\begin{enumerate}
    \item For a fixed $w \in D$, $K_D(\cdot, w)\in A^2(D)$,
    \item $\overline{K_D(z, w)} = K_D(w,z)$ for all $z,w\in D$, and
    \item $f(z)=\int_D K_D(z, w)f(w)\,\mathrm{d}V(w)$ for all $f\in A^2(D)$ and $z\in D$.
\end{enumerate} 

The Bergman kernel on the diagonal of $D$ is given by
\begin{align}
    \kappa_{D}(z) = K_D(z, z), \quad z \in D.
\end{align}
Additionally, the Bergman metric of $D$ can be defined as 
\begin{align} 
B_D(z;\xi)=\sqrt{\sum_{i,j=1}^{n}g_{i\bar{j}}(z)\xi_i\overline{\xi}_j},
\end{align} where 
$g_{i\bar{j}}(z) = \partial^2_{z_i\bar{z}_j} \operatorname{log} \kappa_D(z)$, $z \in D$, and $\xi \in \mathbb{C}^n$.

The Ricci curvature for the Bergman metric is given by
\begin{align}
     R_{D}(z; \xi) = \frac{\sum_{h,j,k,l}g^{k\bar{l}}(z)R_{\bar{h}jk\bar{l}}(z)\bar{\xi}_h \xi_j}{B_{D}(z;\xi)^2}, \quad z \in D \, \text{and } \xi \in \mathbb{C}^{n} \setminus \{0\}.
\end{align}
In the above equation, $R_{\bar{h}jk\bar{l}}(z) = -\partial^2_{z_k\bar{z}_l}g_{j\bar{h}}(z) + \sum_{\mu, \nu} g^{\nu \bar{\mu}}(z) \partial_{z_k}g_{j\bar{\mu}}(z) \partial_{\bar{z}_l}g_{\nu \bar{h}}(z) ,$ where $[g^{\nu \bar{\mu}}(z)]$ is the inverse matrix of $[g_{j \bar{k}}(z)]$.

The scalar curvature of the Bergman metric is given by
\begin{align}
    S_D(z) = \sum_{{h}jk{l}}g^{j\bar{h}}(z)g^{k\bar{l}}(z)R_{\bar{h}jk\bar{l}}(z), \quad z \in D.
\end{align}
The Bergman canonical invariant is given by
\begin{align}\label{Bergman invariant}
    J_D(z) = \frac{\operatorname{det}G_D(z)}{\kappa_D(z)},
\end{align}
where $G_D(z) = [g_{j \bar{k}}(z)],\, z \in D$.

The Ricci cuvature of the Bergman metric on a bounded domain in $\mathbb{C}^{n}$ is strictly bounded above by $n + 1$ (see Kobayashi \cite{Kobayashi}, Fuks \cite{Fuks} and Nazarjan \cite{Nazarjan}). Therefore, we can define the Kobayashi--Fuks metric of $D$ as 
\begin{align}
    M_D^{F}(z; \xi) = B_{D}(z; \xi)\sqrt{(n + 1) - R_D(z; \xi)}, \quad z \in D \text{ and } \xi \in \mathbb{C}^{n}.
\end{align}
We introduce the following extremal functions, which play a crucial role in the computation and estimation of $J_D, R_D, S_D,$ and $M_D^{F}$.
\begin{align}
    \lambda_{D}^k(z) &= \operatorname{sup}\left\{\left|\frac{\partial f}{\partial z_k}(z)\right|^2: f \in S_z, \, \frac{\partial f}{\partial z_j}(z) = 0, \text{ for } 1 \leq j < k\right\},\\
    I_{D}(z; \xi) &= \operatorname{sup}\left\{\xi f''(z)\overline{G}_{D}^{-1}(z)\overline{f''(z)}\xi^{*} :
    f \in S_z, \, f'(z) = 0\right\}, \\
    M_{D}(z; \xi) &= \operatorname{sup}\left\{\kappa_{D}^{n -1}(z) \xi f''(z)\, \overline{\operatorname{{ad}}G_{D}(z)} \, \overline{f''(z)} \xi^{*}: f \in S_z, \, f'(z) = 0\right\},\\
    L_{D}(z) &= \operatorname{sup}\left\{\operatorname{tr}\left(f''(z)\overline{G}_{D}^{-1}(z) \overline{f''(z)} G_{D}^{-1}(z)\right) : f \in S_z, \, f'(z) = 0\right\},\\
    N_{D}(z) &= \operatorname{sup} \left\{\kappa_{D}^{2n - 2}(z)\operatorname{tr}\left(f''(z)\, \overline{\operatorname{ad}G_{D}(z)} \,  \overline{f''(z)}\, G_{D}(z)\right): f \in S_z, \, f'(z) = 0\right\}.
\end{align}
Here $S_z = \{f \in A^2(D): \norm{f}_{L^2(D)} = 1, \, f(z) = 0 \}, \, z \in D, \, \xi \in \mathbb{C}^n, \, \xi^{*}$ is the tanspose conjugate vector of $\xi$, $f'$ denotes the gradient of $f$, $\operatorname{ad}G$ is the adjoint matrix whose $(ij)-$th entry is the cofactor of $g_{j \bar{i}}$, $\operatorname{tr}(\cdot)$ stands for the trace of the matrix, and $f'' = \left(\frac{\partial^2 f}{\partial z_i \partial z_j}\right)_{n \times n}$.

The following proposition gives a useful representation of the Bergman canonical invariant, Ricci curvature and Scalar curvature in terms of extremal functions.
\begin{prop}\label{Pro 2.1}(Krantz and Yu \cite{Krantz-Yu 1996})\label{2.4} Let $D$ be a bounded domain in $\mathbb{C}^n$. Then, for $z \in D$ and $\xi \in \mathbb{C}^n\setminus\{0\}$, we have
\begin{enumerate}
    \item $J_{D}(z) = \frac{\lambda_{D}(z)}{\kappa_{D}^{n + 1}(z)}$, where $\lambda_{D}(z) = \lambda_{D}^1(z) \dots \lambda_{D}^n(z)$.
    \item $R_{D}(z; \xi) = (n + 1) - \frac{1}{B_{D}^2(z; \xi)\kappa_{D}(z)}I_{D}(z; \xi)$.
    \item $R_{D}(z; \xi) = (n + 1) - \frac{1}{B_{D}^2(z; \xi)\kappa_{D}^{n + 1}(z) J_{D}(z)}M_{D}(z; \xi)$.
    \item $S_{D}(z) = n(n + 1) - \frac{1}{\kappa_{D}(z)}L_{D}(z) = n(n + 1) - \frac{1}{\kappa_{D}^{2n + 1}(z)J_{D}(z)}N_{D}(z)$.
\end{enumerate}
\end{prop}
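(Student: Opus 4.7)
The plan is to work throughout in an orthonormal basis $\{\varphi_m\}_{m\geq 0}$ of $A^2(D)$ specially adapted to the base point $z$, constructed by Gram--Schmidt applied to the Riesz representatives of the jet functionals at $z$. For $0\leq j\leq n$, let $k_j\in A^2(D)$ denote the Riesz representative of the continuous linear functional $f\mapsto f(z)$ (for $j=0$) and $f\mapsto\partial_j f(z)$ (for $1\leq j\leq n$). Gram--Schmidt on $(k_0,\ldots,k_n)$ produces orthonormal $\varphi_0,\ldots,\varphi_n$ with $\varphi_m\perp k_0,\ldots,k_{m-1}$, which forces $\varphi_m(z)=0$ and $\partial_j\varphi_m(z)=0$ for $0\leq j<m$; the extremal characterization of the Gram--Schmidt output identifies $|\partial_m\varphi_m(z)|^2=\lambda_D^m(z)$. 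Completing to an orthonormal basis of $A^2(D)$, the remaining vectors ($m>n$) satisfy $\varphi_m(z)=0$ and $\partial_j\varphi_m(z)=0$ for all $j$. Using $K_D(z,w)=\sum_m\varphi_m(z)\overline{\varphi_m(w)}$ together with the identity $g_{i\bar j}(z)=\kappa_D^{-2}(z)\bigl[\kappa_D(z)\partial_{z_i}\partial_{\bar z_j}K_D(z,z)-\partial_{z_i}K_D(z,z)\partial_{\bar z_j}K_D(z,z)\bigr]$, the sum collapses to $g_{i\bar j}(z)=\kappa_D^{-1}(z)\sum_{m=1}^n(\partial_i\varphi_m)(z)\overline{(\partial_j\varphi_m)(z)}$. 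The $n\times n$ matrix $B=[(\partial_j\varphi_m)(z)]_{m,j=1}^n$ is upper triangular with diagonal entries $\sqrt{\lambda_D^m(z)}$, giving $\det G_D(z)=\lambda_D(z)/\kappa_D(z)^n$, and dividing by $\kappa_D(z)$ yields (1).

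For (2), the K\"ahler identity $R_{j\bar h}(z)=-\partial_{z_j}\partial_{\bar z_h}\log\det G_D(z)$ combined with (1) gives $R_{j\bar h}(z)=n\,g_{j\bar h}(z)-\partial_{z_j}\partial_{\bar z_h}\log\lambda_D(z)$, so the identity to establish reduces to
\[
I_D(z;\xi)=\kappa_D(z)\bigl[B_D^2(z;\xi)+\partial_{z_i}\partial_{\bar z_j}\log\lambda_D(z)\,\xi^i\overline\xi^j\bigr].
\]
I would prove this by enlarging the adapted basis with a second Gram--Schmidt step on the Riesz representatives of the second-order jet functionals $f\mapsto\partial_i\partial_j f(z)$, orthogonalized against $\varphi_0,\ldots,\varphi_n$. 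The constraints $f(z)=0=f'(z)$ restrict any admissible $f=\sum a_m\varphi_m$ to the subspace spanned by these additional basis elements (indices $m>n$), and the supremum defining $I_D(z;\xi)$ becomes the largest eigenvalue of a Gram matrix built from the directional second derivatives $\sum_i\xi_i\,\partial_i\partial_j\varphi_m(z)$. Separately, computing $\partial_{z_i}\partial_{\bar z_j}\log\lambda_D(z)$ by differentiating the first-order Gram--Schmidt coefficients with respect to the base point expresses the Hessian in terms of the same directional-derivative data, with an extra $B_D^2(z;\xi)\kappa_D(z)$ contribution arising from the first-order dependence of the basis on the base point. Matching the two expressions gives (2). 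Identity (3) is then (2) rewritten using $\kappa_D^{n+1}J_D=\lambda_D$ and $G_D^{-1}=\operatorname{ad}G_D/\det G_D$, while (4) is obtained by metric-tracing (2) (equivalently (3)).

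The technical heart of the argument is the variable-basis Hessian computation: since the Gram--Schmidt basis $\{\varphi_m\}$ depends smoothly on the base point, tracking its first-order dependence is what produces the $B_D^2(z;\xi)\kappa_D(z)$ shift, which is precisely Kobayashi's upper bound $R_{j\bar h}\leq(n+1)g_{j\bar h}$ in infinitesimal form. Once this shift is identified and the second-order adapted basis is in place, the extremal characterizations of $I_D,M_D,L_D,N_D$ reduce the remaining identities to elementary linear algebra involving the Bergman metric and its adjugate matrix.
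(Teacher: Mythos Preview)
The paper does not supply a proof of this proposition: it is stated with attribution to Krantz and Yu \cite{Krantz-Yu 1996} and used as a black box, so there is no in-paper argument to compare your proposal against.

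On the substance of your sketch: the adapted-basis strategy you outline is exactly the method Krantz and Yu use, and your treatment of part~(1) is complete and correct. For parts~(2)--(4) your outline is on the right track---reducing (2) via the K\"ahler Ricci identity and (1) to the identity $I_D(z;\xi)=\kappa_D(z)\bigl[B_D^2(z;\xi)+\partial_{i}\partial_{\bar j}\log\lambda_D(z)\,\xi^i\overline{\xi}^j\bigr]$ is the correct reformulation, and (3), (4) are indeed algebraic consequences. The one place where your sketch is thin is the ``variable-basis Hessian computation'': you assert that differentiating the Gram--Schmidt coefficients with respect to the base point and matching against the second-order extremal problem yields the claimed $B_D^2\kappa_D$ shift, but you do not actually carry this out, and this is precisely where all the work lies. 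In Krantz--Yu the corresponding step is done by writing $\det G_D$ and its logarithmic Hessian explicitly in terms of the minimizing functions (their minimum-integral formulation is dual to your adapted basis) and then identifying the resulting expression with $I_D$ term by term; your phrase ``matching the two expressions gives (2)'' is standing in for several pages of computation. If you intend this as a proof rather than a plan, that step needs to be written out.
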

If $f: D_1 \to D_2$ is a biholomorphism, it is easy to observe, based on Proposition \ref{Pro 2.1}, that the following transformations hold.
\begin{align}
      \lambda_{D_1}(z) &= |\operatorname{det}J_{\mathbb{C}}f(z)
        |^{2(n + 1)} \lambda_{D_2}(f(z)),\\
        I_{D_1}(z; \xi) &= I_{D_2}(f(z); f'(z)\xi)|\operatorname{det}J_{\mathbb{C}}f(z)
        |^{2},\\
        M_{D_1}(z; \xi) &= M_{D_2}(f(z); f'(z)\xi)|\operatorname{det}J_{\mathbb{C}}f(z)
        |^{2(n + 1)}, \text{ and}\\
        N_{D_1}(z) &= N_{D_2}(f(z)) |\operatorname{det}J_{\mathbb{C}}f(z)
        |^{2(2n + 1)}, 
\end{align}
where $z \in D$ and $\xi \in \mathbb{C}^n$.

We use the following proposition to prove localization for extremal functions (Lemma \ref{localisation lemma 4}).
\begin{prop}[Krantz, Yu \cite{Krantz-Yu 1996}]\label{Krantz-Yu}  
\begin{enumerate}
[labelsep=2mm,leftmargin=7mm,itemsep=5mm]
    \item Let $D_2 \subset D_1$ be domains in $\mathbb{C}^n$. For any $z \in D_2,$ $v \in \mathbb{C}^n$ and any $n \times n$ matrix $A$, we have
        \begin{align}
                v\overline{G}_{D_1}^{-1}(z)v^{*} &\geq \frac{\kappa_{D_1(z)}}{\kappa_{D_2}(z)}v\overline{G}_{D_2}^{-1}(z)v^{*}\\
                \operatorname{tr}\left(A\overline{G}_{D_1}^{-1}(z)A^{*}G_{D_1}^{-1}(z)\right) &\geq \frac{\kappa_{D_1}^2(z)}{\kappa_{D_2}^2(z)} \operatorname{tr}\left(A\overline{G}_{D_1}^{-1}(z)A^{*}G_{D_1}^{-1}(z)\right).
        \end{align}
        \item The functions $\lambda_{D}(z), M_{D}(z; \xi)$ and $N_{D}(z)$ are monotonically decreasing with respect to $D$.
\end{enumerate}
\end{prop}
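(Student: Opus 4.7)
The plan is to reduce both parts to two ingredients: an extremal characterization of the Bergman metric, and the fact that restriction is norm-decreasing from $A^{2}(D_{1})$ to $A^{2}(D_{2})$ when $D_{2}\subset D_{1}$. I would begin by deriving the identity
\begin{equation*}
\kappa_{D}(z)\, v G_{D}(z) v^{*} \;=\; \sup\bigl\{|D_{v}f(z)|^{2}: f\in A^{2}(D),\ \lVert f\rVert_{L^{2}(D)}=1,\ f(z)=0\bigr\},
\end{equation*}
which follows by picking an orthonormal basis $\{\phi_{j}\}_{j\geq 0}$ of $A^{2}(D)$ with $\phi_{0}(z)=\sqrt{\kappa_{D}(z)}$ and $\phi_{j}(z)=0$ for $j\geq 1$, verifying $\kappa_{D}(z) g_{i\bar{j}}(z)=\sum_{k\geq 1}\partial_{z_{i}}\phi_{k}(z)\overline{\partial_{z_{j}}\phi_{k}(z)}$, and optimizing in $\ell^{2}$.

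For Part~(1), a competitor $f$ for the extremal problem on $D_{1}$ restricts to an element of $A^{2}(D_{2})$ vanishing at $z$ with $L^{2}$-norm at most one; rescaling to unit norm produces a competitor on $D_{2}$ whose directional derivative at $z$ has modulus no smaller than $|D_{v}f(z)|$. Taking suprema yields $\kappa_{D_{1}}G_{D_{1}}\leq\kappa_{D_{2}}G_{D_{2}}$ as Hermitian forms. Inverting (and using that $v\overline{G^{-1}}v^{*}=\bar{v}G^{-1}\bar{v}^{*}$ for Hermitian $G$) gives the first displayed inequality with constant $c=\kappa_{D_{1}}(z)/\kappa_{D_{2}}(z)$. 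For the second, insert $\overline{G_{D_{1}}^{-1}}\geq c\overline{G_{D_{2}}^{-1}}$ in one slot and $G_{D_{1}}^{-1}\geq cG_{D_{2}}^{-1}$ in the other: each step produces a trace of a product of two Hermitian PSD matrices (hence nonnegative), and together they yield the $c^{2}$ factor. (The displayed right-hand side in the statement contains a typographical repetition of $G_{D_{1}}^{-1}$ and should read $\overline{G_{D_{2}}^{-1}}$ and $G_{D_{2}}^{-1}$.)

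For Part~(2), the function $\lambda_{D}$ is dispatched at once: Proposition~\ref{Pro 2.1}(1) combined with $J_{D}=\det G_{D}/\kappa_{D}$ gives $\lambda_{D}(z)=\det(\kappa_{D}(z)G_{D}(z))$, and the determinant is monotone on Hermitian PSD matrices. For $M_{D}$ and $N_{D}$ I would combine (a) the restrict-and-rescale map $f\mapsto g:=f|_{D_{2}}/\lVert f|_{D_{2}}\rVert_{L^{2}(D_{2})}$, which produces an admissible competitor on $D_{2}$ with $\xi g''(z)=\xi f''(z)/\lVert f|_{D_{2}}\rVert$, so that the quadratic form in $f''(z)$ only grows; and (b) the fact that $H\mapsto\operatorname{ad}(H)$ preserves the Loewner order on Hermitian positive definite matrices, verified by computing $\frac{d}{dt}\bigl[v\operatorname{ad}(H+t\Delta)v^{*}\bigr]\big|_{t=0}\geq 0$ for $\Delta\geq 0$ via the elementary bound $\lVert w\rVert^{2}\operatorname{tr}(\tilde\Delta)\geq w^{*}\tilde\Delta w$ (with $w=H^{1/2}v^{*}$ wait rather, after the substitution that reduces $vH^{-1}v^{*}$ to $\lVert w\rVert^{2}$ and $\operatorname{tr}(H^{-1}\Delta)$ to $\operatorname{tr}(\tilde\Delta)$, $\tilde\Delta:=H^{-1/2}\Delta H^{-1/2}$). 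Since $\kappa_{D}^{n-1}\overline{\operatorname{ad}G_{D}}=\overline{\operatorname{ad}(\kappa_{D}G_{D})}$, (b) together with Part~(1) makes this matrix, and similarly $\kappa_{D}^{n-1}G_{D}$, monotonically increasing in $D$ in the Loewner order; combining with (a) and the elementary fact that $\operatorname{tr}(f''A\bar{f''}B)=\lVert B^{1/2}f''A^{1/2}\rVert_{HS}^{2}$ is monotonically increasing in each of the Hermitian PSD arguments $A$ and $B$ yields $M_{D_{1}}\leq M_{D_{2}}$ and $N_{D_{1}}\leq N_{D_{2}}$.

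The main obstacle is the Loewner-monotonicity of $\operatorname{ad}$, because the naive decomposition $\operatorname{ad}(H)=\det(H)\,H^{-1}$ is inconclusive -- its two factors move in opposite directions under $H\leq H'$. The one-line differential calculation sketched above -- resting on $w^{*}\Delta w\leq\lambda_{\max}(\Delta)\lVert w\rVert^{2}\leq\operatorname{tr}(\Delta)\lVert w\rVert^{2}$ for PSD $\Delta$ -- delivers the inequality and underpins the monotonicity of both $M_{D}$ and $N_{D}$; every other step reduces to a direct consequence of Part~(1) or a routine extremal-function manipulation.
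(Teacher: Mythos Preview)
The paper does not prove this proposition; it is cited from Krantz--Yu \cite{Krantz-Yu 1996} and used without proof, so there is no in-paper argument to compare against. Your outline is substantively correct and follows the standard route: the extremal identity $\kappa_D(z)\,\xi G_D(z)\xi^*=\sup\{|D_\xi f(z)|^2:\lVert f\rVert=1,\ f(z)=0\}$ plus operator-monotonicity of inversion yields Part~(1), and the two-step trace insertion is valid (you correctly flag the $D_1$-for-$D_2$ misprint on the right-hand side of the second inequality). The identification $\lambda_D=\det(\kappa_D G_D)$ together with determinant-monotonicity dispatches $\lambda_D$; restrict-and-rescale combined with Loewner-monotonicity of the adjugate handles $M_D$ and $N_D$. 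Your differential argument for $\operatorname{ad}$-monotonicity --- reducing, after the substitution $\tilde\Delta=K^{-1/2}\Delta K^{-1/2}$, to the elementary bound $\tilde\Delta\le\operatorname{tr}(\tilde\Delta)\,I$ for PSD $\tilde\Delta$ --- is correct, though the parenthetical still contains a stray self-correction (``wait rather'') that should be edited out.

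One small gap: for $N_D$ you assert that $\kappa_D^{n-1}G_D$ is Loewner-monotone, presumably via the factorization $\kappa_D^{n-1}G_D=\kappa_D^{n-2}\cdot(\kappa_D G_D)$ with both factors monotone. This works only for $n\ge2$; when $n=1$ the scalar factor $\kappa_D^{-1}$ moves the wrong way and $G_D$ alone is not domain-monotone, so your argument does not cover the one-variable case as written. Since the paper applies the proposition only in ambient dimension $n+1\ge2$ this does not affect the downstream results, but a proof of the proposition in full generality would require a separate treatment of $n=1$ (or a check that the transcription of $N_D$ here matches the original Krantz--Yu definition).
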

The following formulas for the Bergman canonical invariant, Ricci and Scalar curvatures of the Bergman metric, and the Kobayashi-Fuks metric will be required to establish our results.
\begin{lem}\label{Formula for kernel} The Bergman canonical invariant, Ricci and Scalar curvatures of the Bergman metric, and Kobayashi--Fuks metric on $\mathbb{D} \times B_n(0, 1)$ at $0$ are as follows.
\begin{align}
    &J_{\mathbb{D}\times B_n(0, 1)}(0) = \frac{2\pi^{n + 1}(n + 1)^n}{n!}, \\
    &R_{\mathbb{D} \times B_n(0, 1)}(0; \xi) = -1, \\
    &S_{\mathbb{D} \times B_n(0, 1)}(0) = - (n + 1), \text{ and} \\
    &M_{\mathbb{D} \times B_n(0, 1)}^F(0; \xi) = (n + 3)^{1/2}(2 |\xi_1|^2 + (n + 1)|\xi'|^2)^{1/2},
\end{align}
for $\xi = (\xi_1, \xi') \in \mathbb{C} \times \mathbb{C}^{n} \setminus \{0\}$.
\end{lem}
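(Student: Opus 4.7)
The strategy is to reduce every computation to the two factors by exploiting the product structure of $\mathbb{D} \times B_n(0, 1)$. Since the Bergman kernel of a product factors as $K_{D_1 \times D_2}\bigl((z, w), (z', w')\bigr) = K_{D_1}(z, z') K_{D_2}(w, w')$, the function $\log \kappa$ is additive across the two factors and the matrix $G$ of the Bergman metric on the product is block diagonal at every point. Consequently, each of the four quantities in the lemma splits into independent contributions from $\mathbb{D}$ and from $B_n(0, 1)$.

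For the Bergman invariant I would start from the known values $\kappa_{\mathbb{D}}(0) = 1/\pi$ and $\kappa_{B_n(0, 1)}(0) = n!/\pi^n$, which immediately give $\kappa(0) = n!/\pi^{n+1}$. A one-line differentiation of $\log \kappa_{\mathbb{D}}(z) = -\log \pi - 2\log(1 - |z|^2)$ yields $G_{\mathbb{D}}(0) = 2$, and the analogous computation on the ball yields $G_{B_n(0, 1)}(0) = (n + 1)I_n$. Then $\det G(0) = 2(n + 1)^n$ and the formula $J(z) = \det G(z)/\kappa(z)$ from \eqref{Bergman invariant} gives $J(0) = 2\pi^{n+1}(n+1)^n/n!$.

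For the Ricci and scalar curvatures the key observation is that each factor is Kähler--Einstein with Einstein constant $-1$. Using the matrix determinant lemma one finds $\det G_{B_n(0,1)}(z) = (n+1)^n/(1-|z|^2)^{n+1}$ and $\det G_{\mathbb{D}}(z) = 2/(1-|z|^2)^2$, so the standard Kähler identity
\begin{align*}
    \sum_{k,l} g^{k\bar l}(z) R_{\bar h j k \bar l}(z) = -\partial_j \bar\partial_h \log \det G(z)
\end{align*}
(which follows from the definition of $R_{\bar h j k \bar l}$ given in the paper) produces $-g_{j\bar h}$ on each factor. Since $\log \det G$ is additive across the product, this identity passes to $\mathbb{D} \times B_n(0, 1)$, giving $R(0; \xi) = -1$ for every non-zero $\xi$ and $S(0) = g^{j\bar h}(0)(-g_{j\bar h}(0)) = -(n+1)$.

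Finally, the Kobayashi--Fuks value follows at once by substituting $R(0; \xi) = -1$ together with $B^2(0; \xi) = 2|\xi_1|^2 + (n+1)|\xi'|^2$ into the definition $M_D^F(z;\xi) = B_D(z;\xi)\sqrt{(n+1) - R_D(z;\xi)}$. The entire computation is routine; the only care needed is keeping the normalization constants of the disc and ball Bergman kernels straight. I do not anticipate any real obstacle, as this is a base-case model computation whose values are then fed into the scaling argument used in the main theorems.
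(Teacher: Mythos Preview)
Your proposal is correct and for $J$, $B$, and $M^F$ it matches the paper's argument almost line for line. The one place you deviate is in the computation of the Ricci and scalar curvatures: the paper works at the level of the full curvature tensor, computing each $R_{\bar h j k \bar \ell}(0)$ by brute force (the second term in the definition vanishes at the origin, and the fourth derivatives of $\log\kappa$ are evaluated directly), and then contracts. You instead invoke the K\"ahler identity $\sum_{k,\ell} g^{k\bar\ell}R_{\bar h j k \bar\ell} = -\partial_j\bar\partial_h\log\det G$ and observe that $\log\det G$ on each factor is a constant plus a multiple of $-\log(1-|z|^2)$ equal to $\log\kappa$ up to an additive constant, so each factor (and hence the product) is Bergman--Einstein with constant $-1$. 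Your route is shorter and more conceptual, and it bypasses the need to tabulate the individual $R_{\bar h j k \bar\ell}(0)$; the paper's route has the minor advantage of being self-contained (no appeal to the contracted Bianchi/K\"ahler--Ricci identity) and of displaying the full curvature tensor explicitly.

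One small caution: in your last paragraph you quote the Kobayashi--Fuks definition verbatim as $B_D\sqrt{(n+1)-R_D}$, but that formula in the paper is stated for $D\subset\mathbb{C}^n$ with $n$ a dummy dimension variable. Here the ambient space is $\mathbb{C}^{n+1}$, so the correct instance is $B_D\sqrt{(n+2)-R_D}$, which with $R=-1$ indeed gives the factor $(n+3)^{1/2}$ you claim. Make sure this dimension shift is written out explicitly in the final version, as the paper itself does.
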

\begin{proof}
    By using \cite{Jarnicki}*{Example $6.1.5$ and Theorem $6.1.11$}, we get
    \begin{align*}
   K_{\mathbb{D} \times B_n(0, 1)}((z_1, z'),(w_1, w')) &= K_{\mathbb{D}}(z_1, w_1) \cdot K_{B_n(0, 1)}(z', w')\\
   &= \frac{n!}{\pi^{n + 1}}(1 - z_1\bar{w}_1)^{-2}\left(1 - \sum_{j = 2}^{n + 1}z_j\bar{w}_j\right)^{-(n + 1)},     \end{align*}
    for all $z_1, w_1 \in \mathbb{D}$ and $z', w' \in B_n(0, 1)$.

    By using the above formula, we get the Bergman metric for the product domain $\mathbb{D} \times B_n(0,1)$.
    \begin{align*}
        g_{j\bar{k}}(z_1, z') = \begin{cases} 
      \frac{2}{\left(1 - |z_1|^2\right)^2}, &\text{if } j = k = 1,\\[2mm]
      \frac{n  + 1}{\left(1 - |z'|^2\right)^2}\left[\left(1 - |z'|^2\right)\delta_{jk} + \bar{z}_j z_k\right],&\text{if } j, k \in \{2, \dots, n + 1\},\\[2mm]
      0, &\text{if } j = 1, k \in \{2, \dots, n + 1\}, \text{ and}\\[2mm]
      0, &\text{if } k = 1, j \in \{2, \dots, n + 1\}.
   \end{cases}
    \end{align*}
    Therefore,
    \begin{align}
        &J_{\mathbb{D} \times B_n(0, 1)}(0) = \frac{\operatorname{det}G_{\mathbb{D} \times B_n(0, 1)}(0)}{\kappa_{\mathbb{D} \times B_n(0, 1)}(0)} = \frac{2\pi^{n + 1} (n + 1)^n}{n!}, \\   
        &B_{\mathbb{D} \times B_n(0, 1)}(0; \xi) = \sqrt{2|\xi_1|^2 + (n + 1)|\xi'|^2},
    \end{align}
    for $\xi \in \mathbb{C}^{n + 1}$, and
    \begin{align}
        R_{\bar{h}jk\bar{\ell}}(0) &= - \frac{\partial^2 g_{j\bar{h}}}{\partial z_k \partial \bar{z}_l}(0) + \sum_{\mu, \nu} g^{\nu \bar{\mu}}(0) \frac{\partial g_{j \bar{\mu}}}{\partial z_k}(0) \frac{\partial g_{\nu \bar{h}}}{\partial \bar{z}_l}(0)\\
        &= - \frac{\partial^2 g_{j\bar{h}}}{\partial z_k \partial \bar{z}_l}(0)\\
        &= \begin{cases} 
      -4, &\text{if } h = j = k = \ell = 1,\\[2mm]
      - (n + 1)(\delta_{kl}\delta_{jh} + \delta_{kh}\delta_{jl}),&\text{if } h, j, k, l \in \{2, \dots, n + 1\}, \text{ and}\\[2mm]
      0, &\text{otherwise.}
   \end{cases}
    \end{align}
    Hence, the Ricci curvature
    \begin{align*}
        R_{\mathbb{D} \times B_n(0, 1)}(0; \xi) &= \frac{\sum_{h,j,k,l}g^{k\bar{l}}(0)R_{\bar{h}jk\bar{l}}(0)\bar{\xi}_h \xi_j}{B^2_{\mathbb{D} \times B_n(0, 1)}(0 ; \xi)}
        = -\frac{2|\xi_1|^2 + (n + 1)|\xi'|^2}{B_{\mathbb{D} \times B_n(0, 1)}^2(0; \xi)} = -1,
    \end{align*}
    for $\xi \in \mathbb{C}^{n + 1} \setminus \{0\}$,
    the Scalar curvature
    \begin{align*}
        S_{\mathbb{D} \times B_n(0, 1)}(0) = \sum_{h, j, k, \ell} g^{j\bar{h}}(0)g^{k\bar{\ell}}(0)R_{\bar{h}jk\bar{\ell}}(0) = -(n + 1), \, \text{and}
    \end{align*}
the Kobayashi--Fuks metric
\begin{align*}
    M^F_{\mathbb{D} \times B_n(0, 1)}(0; \xi) &= B_{\mathbb{D} \times B_n(0, 1)}(0; \xi)\sqrt{(n + 2) - R_{\mathbb{D} \times B_n(0, 1)}(0; \xi)}\\
    &= (n + 3)^{1/2}(2 |\xi_1|^2 + (n + 1)|\xi'|^2)^{1/2},
\end{align*}
for $\xi \in \mathbb{C}^{n + 1} \setminus \{0\}$.
\end{proof}
We will establish localization for extremal functions (Lemma \ref{localisation lemma 4}) by employing the following theorem. Before stating the theorem, we introduce the following notation: For a plurisubharmonic function $\phi$ on $D$, let
\begin{align*}
    L^2_{(0, 1)}(D, \phi) := \left\{g = \sum_{j = 1}^{n}g_j \mathrm{d}\bar{z}_j : \int_{D} |g|^2\operatorname{e}^{-\phi} \, \mathrm{d}V = \int_{D} \left(\sum_{j = 1}^n |g_j|^2\right) \operatorname{e}^{-\phi} \, \mathrm{d}V < \infty \right\}.
\end{align*}
\begin{thm}(Hörmander \cite{Hörmander})\label{Hormander} Let $D$ be a pseudoconvex open set in $\mathbb{C}^n$ and $\phi$ any plurisubharmonic function in $D$. For every $g \in L^2_{(0, 1)}(D, \phi)$ with $\overline{\partial}g = 0$, there is a solution $u \in L^2_{\operatorname{loc}}(D)$ of the equation $\overline{\partial}u = g$ such that
\begin{align*}
    \int_{D} \frac{|u|^2e^{-\phi}}{\left(1 + |z|^2\right)^{2}}\, \mathrm{d}V \leq \int_D|g|^2e^{-\phi} \, \mathrm{d}V.
\end{align*}
\end{thm}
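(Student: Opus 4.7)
I would follow H\"ormander's classical $L^2$ approach: reformulate existence for $\overline{\partial}$ as a Hilbert-space duality problem and establish the required a priori estimate via a weighted Bochner--Kodaira--Nakano (BKN) inequality, with the weight chosen so that the $(1+|z|^2)^{-2}$ factor on the left-hand side emerges naturally from the Levi form of $\log(1+|z|^2)$.

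First I would introduce the auxiliary weight $\psi(z) = \phi(z) + 2\log(1+|z|^2)$. A direct computation gives
\begin{equation*}
    i\,\partial\overline{\partial}\log(1+|z|^2) \geq (1+|z|^2)^{-2}\, i\sum_{j=1}^{n} dz_j\wedge d\bar{z}_j
\end{equation*}
(the smallest eigenvalue of the complex Hessian is attained in the radial direction), so $\psi$ is plurisubharmonic on $D$ with Levi form majorizing $2(1+|z|^2)^{-2}\,I$. Since $e^{-\psi} = e^{-\phi}(1+|z|^2)^{-2}$, any $g \in L^2_{(0,1)}(D,\phi)$ automatically lies in $L^2_{(0,1)}(D,\psi)$.

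Working in the weighted Hilbert spaces $L^2(D,\psi)$ and $L^2_{(0,1)}(D,\psi)$ with Hilbert adjoint $\overline{\partial}^*_\psi$, the core step is the estimate
\begin{equation*}
    |\langle f,g\rangle_\psi|^2 \leq \tfrac{1}{2}\bigl(\|\overline{\partial}^*_\psi f\|_\psi^2 + \|\overline{\partial} f\|_\psi^2\bigr)\int_D |g|^2 (1+|z|^2)^2 e^{-\psi}\,\mathrm{d}V
\end{equation*}
for every smooth, compactly supported $(0,1)$-form $f$; this follows from Cauchy--Schwarz against the Levi-form lower bound for $\psi$, combined with the BKN identity on a pseudoconvex domain. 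Because $\overline{\partial}g = 0$, one may restrict to $f \in \ker\overline{\partial}$, killing the $\overline{\partial}f$ term, and then invoke Hahn--Banach together with the Riesz representation theorem on the closed range of $\overline{\partial}^*_\psi$ to produce $u \in L^2(D,\psi)$ with $\overline{\partial}u = g$ and $\|u\|_\psi^2 \leq \tfrac{1}{2}\int_D |g|^2(1+|z|^2)^2 e^{-\psi}\,\mathrm{d}V$. Unwinding $\psi = \phi + 2\log(1+|z|^2)$ converts this into the stated inequality (in fact with an extra factor of $1/2$ that can be harmlessly absorbed).

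The main obstacle is to justify the BKN identity when $\phi$ is merely plurisubharmonic (not smooth) and $D$ is only pseudoconvex (not relatively compact with smooth boundary). I would handle this by a standard double approximation: exhaust $D$ by a nested sequence of relatively compact strictly pseudoconvex domains with smooth boundary (possible by pseudoconvexity), regularize $\phi$ by convolution to obtain smooth plurisubharmonic approximants $\phi_\varepsilon$, prove the identity in the resulting smooth compact case via integration by parts (where pseudoconvexity gives the boundary term the correct sign), and pass to the limit by weak compactness in $L^2(D,\psi)$ and monotone convergence for the weight.
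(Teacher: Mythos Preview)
The paper does not give its own proof of this theorem; it is quoted verbatim as a classical result from H\"ormander's book and used as a black box in the proof of Lemma~\ref{localisation lemma 4}. There is therefore nothing in the paper to compare your argument against.

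That said, your sketch is the standard H\"ormander argument and is correct in outline. The choice of auxiliary weight $\psi = \phi + 2\log(1+|z|^2)$ is exactly the one H\"ormander uses (see Theorem~4.4.2 in the cited book), and the eigenvalue bound $i\partial\overline{\partial}\log(1+|z|^2) \geq (1+|z|^2)^{-2}\sum dz_j\wedge d\bar z_j$ is what produces the $(1+|z|^2)^{-2}$ factor on the left-hand side. The functional-analytic reduction to the a priori estimate and the double approximation (smooth strictly pseudoconvex exhaustion plus regularization of $\phi$) are also the standard ingredients. Your remark about the spare factor $1/2$ is harmless: you obtain a stronger inequality than the one stated, which of course implies it.
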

\subsection{Kobayashi metric}
Let $D \subset \mathbb{C}^n$ be a domain. The Kobayashi metric on $D$ is the function $M_{D}^K : D \times \mathbb{C}^{n} \to \mathbb{R}^{+}$ defined by
\begin{align}
    M_D^K(z;\xi) = \operatorname{inf}\{ \alpha > 0 : \exists f \in \mathcal{O}(\mathbb{D}, D) \text{ with } f(0) = z, \, f'(0) = \xi/ \alpha\}.
\end{align}
Holomorphic mappings are contractions with respect to the Kobayashi metric in the following sense:
if $g : D_1 \to D_2$ is a holomorphic map, then
\begin{align}
    M_{D_2}^K(g(z); g'(z)\xi) \leq M_{D_1}^K(z; \xi), \quad z \in D_1, \, \xi \in \mathbb{C}^n.
\end{align}
We will need the following formula for the Kobayashi metric to prove our result.
\begin{lem}\label{Kobayashi formula}
    The Kobayashi metric on $\mathbb{D} \times B_n(0, 1)$ at $0$ is as follow.
    \begin{align}\label{4}
        M_{\mathbb{D} \times B_n(0, 1)}^K(0; \xi) = \operatorname{max}\{|\xi_1|, |\xi'|\},
    \end{align}
    where $\xi = (\xi_1, \xi') \in \mathbb{C} \times \mathbb{C}^{n}$.
\end{lem}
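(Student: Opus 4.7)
The plan is to use the standard product property of the Kobayashi (pseudo)metric together with the explicit values on the two factors. Specifically, for any two domains $D_1, D_2$ one has
\begin{align*}
M^K_{D_1 \times D_2}\bigl((z_1, z_2); (\xi_1, \xi_2)\bigr) = \max\bigl\{M^K_{D_1}(z_1; \xi_1),\, M^K_{D_2}(z_2; \xi_2)\bigr\}.
\end{align*}
This reduces the problem to computing $M^K_{\mathbb{D}}(0; \xi_1)$ and $M^K_{B_n(0,1)}(0; \xi')$ separately. So the steps will be: (i) recall/justify the product formula, (ii) compute each factor, (iii) combine.

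For step (i), the inequality $M^K_{D_1 \times D_2} \leq \max$ is immediate from the definition: given competitors $f_j \in \mathcal{O}(\mathbb{D}, D_j)$ realizing the Kobayashi metric on each factor with common scale $\alpha = \max\{M^K_{D_1}, M^K_{D_2}\}$, the map $\zeta \mapsto (f_1(\zeta), f_2(\zeta))$ is a competitor in the product. The reverse inequality follows from the contraction property applied to the coordinate projections $\pi_j : D_1 \times D_2 \to D_j$, which forces $M^K_{D_j}(z_j; \xi_j) \leq M^K_{D_1 \times D_2}((z_1,z_2); (\xi_1, \xi_2))$ for each $j$.

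For step (ii), on $\mathbb{D}$ the map $f(\zeta) = \zeta$ with $\alpha = |\xi_1|$ shows $M^K_{\mathbb{D}}(0; \xi_1) \leq |\xi_1|$, and the Schwarz lemma gives the matching lower bound, so $M^K_{\mathbb{D}}(0; \xi_1) = |\xi_1|$. For the ball, the linear map $f(\zeta) = \zeta\, \xi'/|\xi'|$ lies in $\mathcal{O}(\mathbb{D}, B_n(0,1))$ and gives $M^K_{B_n(0,1)}(0; \xi') \leq |\xi'|$; the reverse inequality follows either from the Schwarz lemma applied to the composition $\langle \cdot, \xi'/|\xi'|\rangle \circ f$ as a self-map of $\mathbb{D}$, or from the known fact that the Kobayashi metric on the ball at the origin coincides with the Euclidean norm.

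Combining (i) and (ii) yields the claimed formula \eqref{4}. There is no real obstacle here; the only subtlety worth being careful about is that the product formula for Kobayashi holds on general product domains (not just convex ones), which is standard and relies only on the projection contraction argument above.
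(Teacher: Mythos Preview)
Your proof is correct and follows essentially the same approach as the paper: the paper's proof simply cites Jarnicki--Pflug for the product formula for the Kobayashi metric (Proposition~3.7.1(b)) and the explicit value on the ball (Example~3.5.6), and you have supplied self-contained arguments for exactly these two ingredients. One small point: in your step~(i), the phrase ``with common scale $\alpha = \max$'' hides a rescaling---if $f_j'(0)=\xi_j/\alpha_j$ with $\alpha_j$ close to $M^K_{D_j}$, replace $f_j$ by $\zeta\mapsto f_j(\alpha_j\zeta/\alpha)$ to get the common scale $\alpha=\max\{\alpha_1,\alpha_2\}$---but this is standard and does not affect the validity of the argument.
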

\begin{proof}
    Follows from \cite{Jarnicki}*{Example $3.5.6$ and Proposition $3.7.1 (b)$}.
\end{proof}
We now introduce the Möbius and Carathéodory pseudodistance.
\begin{defn}
    Let $D$ be a domain in $\mathbb{C}^{n}$. The Möbius pseudodistance for $D$ is given by
      \begin{align}
      c_D^*(z,w) := \operatorname{sup}\left\{\left|\frac{f(z) - f(w)}{1 - f(z)\overline{f(w)}}\right|: f \in \mathcal{O}(D, \mathbb{D})\right\},
      \end{align}
      and the Carathéodory pseudodistance for $D$ is given by
      \begin{align}
      c_D(z, w) := \operatorname{tanh}^{-1}(c_D^{*}(z, w)). 
     \end{align}
where $z, w \in D$.
\end{defn}
The following theorem due to Royden \cite{Royden 1970}*{Lemma $2$}, whose proof can be found
in \cite{Graham 1975}*{Lemma $4$}, is helpful in proving localization for the Kobayashi metric (Lemma \ref{localization of Kobayashi} and \eqref{loc of kobayashi}).
\begin{thm}[\cite{Graham 1975}*{Lemma $4$}, \cite{Jarnicki}*{Proposition 7.2.9}]\label{Graham theorem}
    Let $D \subset \mathbb{C}^{n}$ be a bounded domain in and $U \subset D$ be any subdomain. Then, for $z \in U \, \text{and } \xi \in \mathbb{C}^{n}$, we have 
    \begin{align*}
        M_U^K(z; \xi) \leq C_{U}(z) M_{D}^K(z ; \xi), \text{ and}\\
        M_U^K(z; \xi) \leq \operatorname{coth}c_{D}(z, D\setminus U) M_{D}^K(z; \xi),
    \end{align*}
    where $C_U(z) := \operatorname{sup}\left\{1/r > 1 : \text{ there exists } f \in \mathcal{O}\left(\mathbb{D}, {D}\right) \text{ such that } f(0) = z, \, f(r) \in D \setminus U\right\}$, and $c_D(z, D\setminus U) := \operatorname{inf}\left\{c_D(z,w) : w \in D \setminus U\right\}$.
\end{thm}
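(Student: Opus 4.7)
Both bounds follow the same rescaling strategy, and the two hypotheses on $U$ merely yield two different lower estimates on the same rescaling parameter. Fix $\alpha > M_D^K(z;\xi)$ and pick $f \in \mathcal{O}(\mathbb{D}, D)$ with $f(0) = z$ and $f'(0) = \xi/\alpha$; set
\[
    s := \sup\bigl\{t \in (0, 1] : f(t\mathbb{D}) \subset U\bigr\}.
\]
If $s = 1$ then $f$ itself is admissible for $M_U^K(z;\xi)$, giving $M_U^K(z;\xi) \leq \alpha$, which suffices because both $C_U(z)$ and $\coth c_D(z, D\setminus U)$ are at least $1$. Otherwise openness of $U$, continuity of $f$, and the maximality of $s$ force the existence of some $w_0$ with $|w_0| = s$ and $f(w_0) \in D\setminus U$. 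The rescaling $g(w) := f(sw)$ lies in $\mathcal{O}(\mathbb{D}, U)$ and satisfies $g(0) = z$, $g'(0) = s\xi/\alpha$, so $M_U^K(z;\xi) \leq \alpha/s$. The remaining task is to lower-bound $s$ under each hypothesis.

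For the first inequality, I pre-compose with a rotation: letting $\theta_0 := \arg w_0$, the map $\tilde f(w) := f(e^{i\theta_0}w)$ satisfies $\tilde f(0) = z$ and $\tilde f(s) = f(w_0) \in D\setminus U$. This exhibits $(\tilde f, s)$ as an admissible pair for the supremum defining $C_U(z)$, so $C_U(z) \geq 1/s$; hence $M_U^K(z;\xi) \leq \alpha \cdot C_U(z)$, and taking $\alpha \downarrow M_D^K(z;\xi)$ yields the first bound.

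For the second inequality, I apply the Schwarz--Pick lemma to the disk self-map $h\circ f$ for arbitrary $h \in \mathcal{O}(D, \mathbb{D})$:
\[
    \frac{|h(f(w_0)) - h(z)|}{|1 - \overline{h(z)}\,h(f(w_0))|} \leq |w_0| = s.
\]
Taking the supremum over all such $h$ gives $c_D^*(z, f(w_0)) \leq s$, and since $f(w_0) \in D\setminus U$ this further dominates $\inf_{w\in D\setminus U} c_D^*(z,w) = \tanh c_D(z, D\setminus U)$. Therefore $s \geq \tanh c_D(z, D\setminus U)$, so $M_U^K(z;\xi) \leq \alpha \coth c_D(z, D\setminus U)$; taking $\alpha \downarrow M_D^K(z;\xi)$ completes the argument.

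The only substantive point is the existence of a boundary point $w_0$ with $|w_0| = s$ and $f(w_0) \in D\setminus U$; this is a short continuity/compactness argument using the closedness of $D\setminus U$ inside $D$ and the fact that $\bigcup_{t<s} f(t\mathbb{D}) \subset U$ while $f(t\mathbb{D}) \not\subset U$ for every $t > s$. Everything afterwards is a direct unpacking of the definitions of $C_U(z)$ and $c_D^*$ together with Schwarz--Pick.
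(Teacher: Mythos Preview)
The paper does not supply its own proof of this theorem; it is quoted from the literature (Royden, Graham, Jarnicki--Pflug) and used as a black box. Your argument is correct and is essentially the classical proof one finds in those references: restrict an almost-extremal disk for $M_D^K$ to the largest subdisk landing in $U$, then bound the rescaling radius from below either directly via the definition of $C_U(z)$ (after a rotation) or via Schwarz--Pick applied to $h\circ f$ for competitors $h$ in the Carath\'eodory problem. There is nothing to compare against in the present paper, and nothing to correct in your write-up.
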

\section{Main Results}\label{Cone}
We recall some key facts about exponentially flat domains; for details, see \cite{Ravi}. This Section is divided into two parts. In the first part, we will prove the nontangential asymptotic behaviour of the Bergman canonical invariant, Ricci and Scalar curvatures of the Bergman metric, and Kobyashi--Fuks metric at exponentially flat infinite type boundary points of bounded smooth pseudoconvex domains in $\mathbb{C}^{n + 1}$. In the second part, we will prove the nontangential asymptotic behaviour of the Kobayashi metric at exponentially flat infinite type boundary points of bounded smooth domains in $\mathbb{C}^{n + 1}$.

Let $D \subset \mathbb{C}^{n + 1}$ be a bounded smooth domain with $0 \in bD$ and $q(t)$ be an $(\alpha, N)$ cone type curve approaching $0$. If $0$ is an exponentially flat boundary point, then there exists a neighbourhood $U$ of origin, such that
\begin{align}\label{*}
    D \cap U = \{z \in U: \rho(z) < 0\},
\end{align}
where the defining function $\rho$ is as in \eqref{1}. 

Define $T^1_t: \mathbb{C}^{n+1} \to \mathbb{C}^{n+1}$ by
\begin{align}
    T^1_t(w) = (w_1 - i\operatorname{Im}q_1(t), w_2, \dots, w_{n+1}),
\end{align}
and introduce the Hermitian map $R_t^1: \mathbb{C}^{n + 1}  \to \mathbb{C}^{n + 1}$ given by
\begin{align}
R_t^1(x) = \begin{cases}
x, & \text{if } r(t) := \| (0, q_2(t), \dots, q_{n+1}(t)) \| = 0,\\[2mm]
e_1, & \text{if } x = e_1 \text{ and } r(t) \neq 0, \text{ and}\\[2mm]
e_2, & \text{if } x = \frac{(0, q_2(t), \dots, q_{n+1}(t))}{\| (0, q_2(t), \dots, q_{n+1}(t)) \|} \text{ and } r(t) \neq 0.
\end{cases}
\end{align}
Consequently, $\Tilde{q}(t) := R^1_t\circ T^1_t(q(t)) = (\operatorname{Re}q_1(t), r(t), 0,\dots, 0) $, and $\operatorname{Re}q_1(t) < 0$.

Choose $\delta_0 > 0$ such that 
\begin{align}
    (-4\delta_0, 4\delta_0)^2 \times B_n(0, 4\delta_0) \subset U.
\end{align} 
Since $q(t) \to 0$ as $t \to 0^{+}$, $q(t) \in (-\delta_0, \delta_0)^2 \times B_n(0, \delta_0) \subset U$ for all sufficiently small $t > 0$.  

Thus, $\Tilde{q}(t) \in \Omega \cap R^1_t\left(T^1_t \left((-\delta_0, \delta_0)^2 \times B_n(0, \delta_0)\right)\right) \cap 
C_{\alpha, N} \subset \Omega \cap U \cap C_{\alpha,N}$ for all sufficiently small $t > 0$, where $\Omega = \big\{z \in \mathbb{C}^{n+1} : \rho(z) < 0\big\}$.

For small $t > 0$, there exists a unique point $p(t) \in b\Omega$ such that 
\begin{align*}
   d(t) := \operatorname{dist}(\Tilde{q}(t), b\Omega) = |\Tilde{q}(t) - p(t)|, \, \, \, p_1(t) \in \mathbb{R}, \, \, \, p_2(t) \geq 0, \, \, \, \text{and} \, \, \, p_j(t) = 0 \text{ for } j = 3, \dots, n+1. 
\end{align*}
As $p(t) \to 0$ when $t \to 0^{+}$, it follows that $p(t) \in B_{n+1}(0, \delta_0)$ for all sufficiently small $t > 0$.

Since $\rho(z) = \operatorname{Re}z_1 + \phi\left(|z_2|^2 + \dots + |z_{n + 1}|^2\right)$, the gradient $\nabla\rho\left(p(t)\right) = \left(1, 2p_2(t)\phi'(p_2(t)^2), 0\right) $.

Now, define $T^2_t : \mathbb{C}^{n+1} \to \mathbb{C}^{n+1}$ by
\begin{align}
    T^2_t(w) = w - p(t),
\end{align}
and introduce a Hermitian linear map $R^2_t : \mathbb{C}^{n+1} \to \mathbb{C}^{n+1}$ which maps 
\begin{align}
    \frac{\nabla \rho (p(t))}{\norm{\nabla \rho(p(t))}} &\mapsto e_1, \\
    \frac{(-2p_2(t)\phi'(p_2(t)^2), 1, 0)}{\norm{\nabla \rho(p(t))}} &\mapsto e_2, \\
    e_j &\mapsto e_j, \quad \text{for } j = 3, \dots, n+1.  
\end{align}
Define $\gamma_t = R^2_t \circ T^2_t $ and $A(t) = \|\nabla \rho(p(t))\|$, therefore
\begin{align}
\left(\gamma_t\right)^{-1}(z_1, \dots, z_{n+1}) &= \bigg(\frac{z_1 - 2p_2(t)z_2\phi'(p_2(t)^2)}{A(t)} - \phi(p_2(t)^2), \frac{z_2 + 2z_1p_2(t)\phi'(p_2(t)^2)}{A(t)} \nonumber\\
& \hspace{80mm} + p_2(t), z_3, \dots, z_{n+1}\bigg), \\
\rho \circ \gamma_t^{-1}(z_1, \dots, z_{n+1}) &= \frac{\operatorname{Re}z_1}{A(t)} - \frac{2p_2(t)\phi'(p_2(t)^2)}{A(t)}\operatorname{Re}z_2 - \phi(p_2(t)^2) \nonumber\\
& \hspace{2mm}  + \phi\left(\left|\frac{z_2}{A(t)} + p_2(t) + \frac{2p_2(t)\phi'(p_2(t)^2)}{A(t)}z_1\right|^2 + |z_3|^2 + \dots + |z_{n+1}|^2\right).
\end{align}
Since $W := (-\delta_0/10, \delta_0/10)^2 \times B_n(0, \delta_0/10) \subset \gamma_t \circ  R^1_t \circ T^1_t(U)$ for all sufficiently small $t > 0$, $\gamma_t\left(R^1_t \circ T^1_t(D)\right) \cap W = \gamma_t(\Omega) \cap W$. Let $\epsilon > 0$.
Define
\begin{align}
    D_t^{\epsilon} &= \left\{(z_1, \dots, z_{n+1}) \in W : \rho \circ \gamma_t^{-1}(z_1, z_2, \dots, z_{n+1}) < 0, \operatorname{Re}z_1 > - d(t)^{{1}/{(1+\epsilon)^2}}\right\}, \text{ and} \\
    \widetilde{D}_t^{\epsilon} &= \left\{z \in D_t^{\epsilon}: \left|h_{\epsilon}(z)\right| > \operatorname{exp}\left(- d(t)^{{1}/{(1+\epsilon)^2}}c_0^{\epsilon}\right)\right\},
\end{align}
where $c_0^{\epsilon} = \operatorname{cos}\left(\frac{\pi}{2(1 + \epsilon)}\right)$
and
$h_\epsilon : D_t^{\epsilon} \to \mathbb{D}$ is defined by 
\begin{align}
    h_\epsilon(z) = \operatorname{exp}\left(-(-z_1)^{{1}/{(1+ \epsilon)}}\right). 
\end{align}
For $\epsilon, t > 0$, let
\begin{align}
    d^*(t) &:= 
    \operatorname{min}\{s \in\mathbb{R}^+: se_2 + (-d(t), 0, \dots, 0) \in b\gamma_t(\Omega)\}, \\
    d_1^\epsilon(t) &:=
     \operatorname{sup} \left\{|z'| : z \in W, \phi\left(\left|\frac{z_2}{A(t)} + p_2(t) + \frac{2p_2(t)\phi'(p_2(t)^2)}{A(t)}z_1\right|^2 + \dots + |z_{n+1}|^2\right) \right.\nonumber\\
     &\left. \hspace{55mm} \leq \frac{d(t)^{\frac{1}{(1+\epsilon)}}}{A(t)} 
    + \phi(p_2(t)^2) + \frac{2p_2(t)\phi'(p_2(t)^2)}{A(t)}\operatorname{Re}(z_2)\right\},\\
    d_2^\epsilon(t) &:=
    \operatorname{sup}\left\{|z'| : z \in W, \phi\left(\left|\frac{z_2}{A(t)} + p_2(t) + \frac{2p_2(t)\phi'(p_2(t)^2)}{A(t)}z_1\right|^2 + \dots + |z_{n+1}|^2\right) \right.\nonumber\\
    &\left. \hspace{55mm} \leq \frac{d(t)^{\frac{1}{(1+\epsilon)^2}}}{A(t)} 
    + \phi(p_2(t)^2) + \frac{2p_2(t)\phi'(p_2(t)^2)}{A(t)}\operatorname{Re}(z_2)\right\},
    \end{align}
and $\Sigma : \mathbb{C}^{n+1} \to \mathbb{C}^{n+1}$
by
\begin{align}
    \Sigma(z_1, z') = \left(\frac{z_1}{d(t)}, \frac{z'}{d^*(t)}\right),
\end{align}
where $z' = (z_2, \dots, z_{n+1})$.

We have the following limiting behaviour of $\phi, \, d(t), \, d^*(t), d_1^\epsilon(t)$ and $d_2^\epsilon(t)$ at zero, which is useful for proving our results.
\begin{lem}[\cite{Ravi}*{Lemma $3.1$}]\label{3.1}
Let $D \subset \mathbb{C}^{n + 1}$ be as in \eqref{*}. Then
\begin{align*}
    &\lim_{t \to 0^+} \frac{\phi \left(p_2(t)^2\right)}{d(t)} 
    = 0, &&\lim_{t \to 0^+} \frac{\phi'\left(p_2(t)^2\right)}{d(t)} 
    = 0, && \lim_{t \to 0^{+}} \frac{p_2(t)}{d^{*}(t)}  = 0,\\ &\liminf_{t \to 0^+} \frac{d^{*}(t)}{d_1^{\epsilon}(t)} \geq \frac{1}{(1 + \epsilon)^{\frac{1}{2m}}}, \text{ and}  &&\liminf_{{t \to 0^*}}\frac{d^*(t)}{d_2^\epsilon(t)} \geq \frac{1}{(1 + \epsilon)^{\frac{1}{m}}}.
\end{align*}
\end{lem}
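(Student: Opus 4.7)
The plan is to derive explicit formulas relating $d(t)$, $p_2(t)$, $r(t)$, and $\operatorname{Re} q_1(t)$ from the orthogonal-projection structure, and then to exploit the exponential flatness of $\phi$---in particular the asymptotic $\phi^{-1}(y)\sim(-c\log y)^{-1/m}$ extracted from property $(4)$---to control the ratios. Since $p(t)$ is the foot of the perpendicular from $\tilde q(t)$ onto $b\Omega$, the vector $\tilde q(t)-p(t)$ is antiparallel to $\nabla\rho(p(t))=(1,2p_2(t)\phi'(p_2(t)^2),0,\ldots,0)$, and combined with $p_1(t)=-\phi(p_2(t)^2)$ this yields
\[
\operatorname{Re} q_1(t)=-\phi(p_2(t)^2)-\frac{d(t)}{A(t)},\qquad r(t)=p_2(t)\Bigl(1-\frac{2d(t)\phi'(p_2(t)^2)}{A(t)}\Bigr),
\]
so in particular $0\le p_2(t)-r(t)\le d(t)$ for small $t$.

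To prove $\phi(p_2(t)^2)/d(t)\to 0$, I split into cases on the size of $p_2(t)$ relative to $d(t)$. If $p_2(t)\le 2d(t)$, then $p_2(t)\to 0$ and property $(2)$ gives $\phi(p_2(t)^2)=o(p_2(t)^2)=o(d(t)^2)$, so the ratio is $o(d(t))\to 0$. If $p_2(t)>2d(t)$, then $r(t)\ge p_2(t)/2$, and combining the cone condition $-\operatorname{Re} q_1(t)>\alpha r(t)^N$ with the identity $-\operatorname{Re} q_1(t)=\phi(p_2(t)^2)+d(t)/A(t)$ forces $d(t)\gtrsim p_2(t)^N$; since $\phi$ vanishes to infinite order, $\phi(p_2(t)^2)/p_2(t)^N\to 0$, proving the claim. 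The identical dichotomy with $\phi'$ in place of $\phi$ gives the second limit. For $p_2(t)/d^*(t)$, solving $\rho\circ\gamma_t^{-1}(-d(t),d^*(t),0,\ldots,0)=0$ and substituting the smallness of $\phi(p_2(t)^2)/d(t)$ and $p_2(t)\phi'(p_2(t)^2)$ just established gives $d^*(t)\sim\sqrt{\phi^{-1}(d(t))}\sim|\log d(t)|^{-1/(2m)}$, which decays strictly slower than any power of $d(t)$, while the cone condition forces $p_2(t)\lesssim r(t)\lesssim d(t)^{1/N}$; the ratio therefore tends to zero.

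For the last two limits, observe that the supremum in the definition of $d_j^\epsilon(t)$ is asymptotically attained by taking $z_2\approx 0$ and saturating $|(z_3,\ldots,z_{n+1})|$, yielding $d_j^\epsilon(t)\sim\sqrt{\phi^{-1}(d(t)^{1/(1+\epsilon)^j})}$. Applying $\phi^{-1}(y)\sim(-c\log y)^{-1/m}$ then gives
\[
\frac{d^*(t)}{d_j^\epsilon(t)}\sim\Bigl(\frac{|\log d(t)^{1/(1+\epsilon)^j}|}{|\log d(t)|}\Bigr)^{1/(2m)}=\frac{1}{(1+\epsilon)^{j/(2m)}},
\]
matching the stated $\liminf$ bounds for $j=1,2$. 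The main technical obstacle is to justify that the affine perturbation $p_2(t)+\tfrac{2p_2(t)\phi'(p_2(t)^2)}{A(t)}z_1$ inside the $\phi$-argument does not affect the leading asymptotic; this is where property $(5)$ enters, ensuring that $\phi^{-1}$ is slowly varying enough that tangential shifts of size $o(d^*(t))$ do not alter its value to leading order, and that the factor $1/A(t)\to 1$ multiplying $d(t)^{1/(1+\epsilon)^j}$ can be absorbed without loss.
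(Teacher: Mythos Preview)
The paper does not prove this lemma; it is quoted verbatim from the companion paper \cite{Ravi}*{Lemma~3.1}, so there is no proof here to compare against. Your outline is therefore being assessed on its own merits.

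Your strategy is sound and is essentially what the cited proof does: derive the projection identities for $\operatorname{Re}q_1(t)$ and $r(t)$, combine the cone condition with the infinite-order vanishing of $\phi$ to force $p_2(t)\lesssim d(t)^{\min(1,1/N)}$, and then read off the asymptotic $\phi^{-1}(y)\sim(-c\log y)^{-1/m}$ from property~(4) to compare $d^*(t)$ with $d_j^\epsilon(t)$. Two small corrections. First, the inequality $p_2(t)\lesssim r(t)$ is the wrong way round: from $r(t)=p_2(t)(1-2d(t)\phi'(p_2(t)^2)/A(t))$ one has $r(t)\le p_2(t)\le r(t)+d(t)$, and it is the second inequality together with $r(t)\lesssim d(t)^{1/N}$ that yields the needed polynomial bound on $p_2(t)$. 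Second, the slow-variation property you need for $\phi^{-1}$, namely $\phi^{-1}(cy)/\phi^{-1}(y)\to 1$ and $\phi^{-1}(y^{1/(1+\epsilon)^j})/\phi^{-1}(y)\to (1+\epsilon)^{j/m}$, follows directly from the asymptotic $\phi^{-1}(y)=(c|\log y|)^{-1/m}(1+o(1))$ furnished by property~(4); property~(5) is not required here. The additive tangential shift $\eta=p_2(t)+O(p_2(t)\phi'(p_2(t)^2))$ is handled simply because $|\eta|=O(d(t)^{\min(1,1/N)})=o(\sqrt{\phi^{-1}(d(t))})$, so that it produces only a multiplicative $(1+o(1))$ perturbation of the argument of $\phi$, which the slow variation of $\phi^{-1}$ then absorbs.

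The one place where your sketch is genuinely incomplete is the \emph{upper} bound $d_j^\epsilon(t)\le(1+o(1))\sqrt{\phi^{-1}(d(t)^{1/(1+\epsilon)^j})}$, which is what the $\liminf$ statement actually requires; your remark about taking $z_2\approx 0$ only gives the lower bound. You have, however, correctly identified this as the main technical step and indicated the right mechanism (control of the perturbation together with $A(t)\to 1$), so filling it in is routine.
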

We also have the following limiting behaviour of $f \circ \Sigma (D_t^{\epsilon})$.
\begin{lem}[\cite{Ravi}*{Lemma $3.3$}]\label{ScalingLemma2}
    Let $D \subset \mathbb{C}^{n + 1}$ be as in \eqref{*}. Then, for every $\epsilon, \delta > 0,$ there exists $t_0(\delta, \epsilon) > 0$ such that
    \begin{align}\label{inclusions 2}
        (1-\delta)\mathbb{D}\times B_n(0,1) \subset f \circ \Sigma \left(D_t^\epsilon\right) \subset
        \mathbb{D} \times B_n(0, {d}_2^\epsilon(t)/d^*(t)),
    \end{align}
for each $ 0 < t < t_0(\delta, \epsilon
)$, where $f(z_1, z') = ((1+z_1)/(1-z_1), z')$.
\end{lem}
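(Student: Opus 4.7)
The plan is to establish the two set inclusions separately, exploiting the defining constraints of $D_t^{\epsilon}$, the asymptotic vanishing of $p_2(t)$, $\phi(p_2(t)^2)/d(t)$, $\phi'(p_2(t)^2)/d(t)$ and $p_2(t)/d^*(t)$ recorded in Lemma \ref{3.1}, and the scaling property (5) of Definition \ref{exp flat fun}.

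For the right inclusion $f\circ\Sigma(D_t^\epsilon)\subset\mathbb{D}\times B_n(0,d_2^\epsilon(t)/d^*(t))$, I would fix an arbitrary $z\in D_t^\epsilon$ and analyse the two components of $f\circ\Sigma(z)=\bigl((1+z_1/d(t))/(1-z_1/d(t)),\,z'/d^*(t)\bigr)$ separately. The tangential bound is immediate: inserting the inequality $\operatorname{Re}z_1>-d(t)^{1/(1+\epsilon)^2}$ into $\rho\circ\gamma_t^{-1}(z)<0$ yields exactly the inequality defining $d_2^\epsilon(t)$, so $|z'|\leq d_2^\epsilon(t)$, and hence $|z'/d^*(t)|\leq d_2^\epsilon(t)/d^*(t)$. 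For the normal component, using $\phi\geq 0$, the same defining inequality gives $\operatorname{Re}z_1<2p_2(t)\phi'(p_2(t)^2)\operatorname{Re}z_2+A(t)\phi(p_2(t)^2)$, and both terms on the right are $o(d(t))$ by Lemma \ref{3.1}; hence $\operatorname{Re}(z_1/d(t))$ is negative up to a vanishing error (absorbable by the strict inequality in $\rho<0$), placing $f(z_1/d(t))\in\mathbb{D}$.

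For the left inclusion $(1-\delta)\mathbb{D}\times B_n(0,1)\subset f\circ\Sigma(D_t^\epsilon)$, given $(w_1,w')\in(1-\delta)\mathbb{D}\times B_n(0,1)$ I would propose the candidate preimage
\begin{align*}
z_1:=d(t)\cdot f^{-1}(w_1)=d(t)\cdot\frac{w_1-1}{w_1+1},\qquad z':=d^*(t)\,w',
\end{align*}
and verify that $z\in D_t^\epsilon$ for all sufficiently small $t$. Containment in $W$ is clear since $d(t),d^*(t)\to 0$ and $|f^{-1}(w_1)|$ stays bounded on $|w_1|\leq 1-\delta$. The bound $\operatorname{Re}z_1>-d(t)^{1/(1+\epsilon)^2}$ follows from $|z_1|\leq C_\delta d(t)$ together with $d(t)/d(t)^{1/(1+\epsilon)^2}\to 0$. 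The crucial point is $\rho\circ\gamma_t^{-1}(z)<0$: the restriction $|w_1|\leq 1-\delta$ gives $\operatorname{Re}f^{-1}(w_1)\leq -\delta/(2-\delta)=:-c_\delta$, so the normal contribution is $-c_\delta d(t)/A(t)$. The argument of $\phi$ on the left-hand side of $\rho\circ\gamma_t^{-1}(z)$ equals $|w'|^2(d^*(t))^2(1+o(1))$ once the cross terms in $p_2(t)$ are absorbed using $p_2(t)/d^*(t)\to 0$; since $\phi((d^*(t))^2)\sim d(t)$ by the definition of $d^*(t)$, and $|w'|<1$, the scaling property (5) gives $\phi(|w'|^2(d^*(t))^2)=o(d(t))$, strictly less than $c_\delta d(t)/A(t)-\phi(p_2(t)^2)-o(d(t))$ for small $t$.

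The main obstacle I anticipate is achieving this last estimate \emph{uniformly} for $w'$ ranging over the whole open ball $B_n(0,1)$. The convergence $\phi((d^*(t))^2|w'|^2)/\phi((d^*(t))^2)\to 0$ supplied by (5) is only uniform on compact subsets $\{|w'|\leq 1-\delta'\}$, so one must either exhaust $B_n(0,1)$ by such compact subsets and extract a uniform $t_0=t_0(\delta,\epsilon)$ via a careful matching of $\delta'$ to $\delta$, or modify the preimage construction by slightly shrinking $z'$ to gain room from the monotonicity of $\phi$. Marshalling all the correction terms (from $p_2(t)\phi'(p_2(t)^2)$, $\phi(p_2(t)^2)$, $A(t)\to 1$, and the scaling remainder) into a single $t_0(\delta,\epsilon)$ is the most delicate bookkeeping step of the proof.
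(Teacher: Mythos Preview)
The paper does not give its own proof of this lemma: it is quoted verbatim from \cite{Ravi}*{Lemma~3.3} and used as a black box. There is therefore no proof in the present paper to compare your proposal against.

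That said, your outline follows the natural strategy and is largely sound, but two points deserve comment. For the right inclusion in the normal direction you claim that $\operatorname{Re}(z_1/d(t))<0$ follows because the correction terms $2p_2(t)\phi'(p_2(t)^2)\operatorname{Re}z_2 + A(t)\phi(p_2(t)^2)$ are $o(d(t))$ and ``absorbable by the strict inequality in $\rho<0$''. This is not quite right: an $o(d(t))$ upper bound on $\operatorname{Re}z_1$ does not force $\operatorname{Re}z_1<0$, and the strict inequality $\rho<0$ gives no quantitative room. To get $f(z_1/d(t))\in\mathbb{D}$ exactly (rather than in $(1+o(1))\mathbb{D}$) one needs a sharper argument, or else the statement should be read as yielding containment in a disk of radius $1+o(1)$, which suffices for every application in this paper. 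Your tangential bound via $d_2^\epsilon(t)$ is correct as written.

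For the left inclusion you have correctly identified the genuine technical point: the scaling hypothesis~\eqref{Scaling} gives $\phi(|w'|^2 r)/\phi(r)\to 0$ only pointwise in $|w'|<1$, not uniformly up to the boundary sphere. Your two proposed remedies (exhaustion by compacta matched to $\delta$, or shrinking the tangential preimage slightly) are both viable; either, combined with the convexity of $\phi$ near $0$ and the precise definition of $d^*(t)$, closes the argument. This is indeed the delicate step.
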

\subsection{Asymptotic behaviour of the Bergman invariant, Ricci curvature, Scalar curvature, and
Kobayashi--Fuks metric}
In the following lemma, we will establish localization for extremal
functions. The method of the proof is same as in \cite{Ravi}*{Lemma $3.2$}.
\begin{lem}\label{localisation lemma 4}
     Let $D \subset \mathbb{C}^{n+1}$ be as in \eqref{*}. Further, assume that $D$ is pseudoconvex. For all sufficiently small $t > 0$, let $D_t = \gamma_t\left(R_t^1 \circ T_t^1(D)\right)$. Then, for $\epsilon > 0$ and $\xi(t) \in \mathbb{C}^{n + 1} \setminus \{0\}$, we have
    \begin{align*}
            &\lim_{t \to 0^{+}}\frac{\lambda_{D_t}(-d(t), 0)}{\lambda_{D_t^{\epsilon}}(-d(t), 0)} = 1, \quad \quad \quad \quad \quad \quad
            \lim_{t \to 0^{+}}\frac{I_{D_t}((-d(t), 0); \xi(t))}{I_{D_t^{\epsilon}}((-d(t), 0); \xi(t))} = 1,\\
        &\lim_{t \to 0^{+}} \frac{M_{D_t}((-d(t), 0); \xi(t))}{M_{D_t^{\epsilon}}((-d(t), 0); \xi(t))} = 1, \text{ and} \quad
        \lim_{t \to 0^{+}} \frac{N_{D_t}(-d(t), 0)}{N_{D_t^{\epsilon}}(-d(t), 0)} = 1.
    \end{align*}
\end{lem}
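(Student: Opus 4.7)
The plan is to prove all four limits by the standard Hörmander $\bar\partial$-localization scheme, adapting \cite{Ravi}*{Lemma $3.2$} (which handles the Bergman-kernel case) to the four extremal functionals. Write $p_t := (-d(t), 0)$.

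I first establish that each ratio is at most $1 + o(1)$. The inclusion $D_t^{\epsilon} \subset D_t$ and Proposition \ref{Krantz-Yu}(2) immediately give $\lambda_{D_t}(p_t) \leq \lambda_{D_t^{\epsilon}}(p_t)$, $M_{D_t}(p_t; \xi(t)) \leq M_{D_t^{\epsilon}}(p_t; \xi(t))$, and $N_{D_t}(p_t) \leq N_{D_t^{\epsilon}}(p_t)$. For the $I$-functional, which carries the Bergman metric of $D_t$ in its definition, I combine Proposition \ref{Krantz-Yu}(1) (applied with $v = \xi f''(p_t)$) with the Bergman-kernel localization $\kappa_{D_t}(p_t)/\kappa_{D_t^{\epsilon}}(p_t) \to 1$ from \cite{Ravi}*{Lemma $3.2$} to conclude that $I_{D_t}(p_t; \xi(t)) \leq (1+o(1)) I_{D_t^{\epsilon}}(p_t; \xi(t))$.

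For the matching lower bound I transplant near-extremal functions from $D_t^{\epsilon}$ to $D_t$ via a $\bar\partial$-correction. Fix $\eta > 0$ and, for each of the four functionals, pick $f_t \in S_{p_t}^{D_t^{\epsilon}}$ (with $f_t'(p_t) = 0$ imposed when required) attaining at least $(1-\eta)$ times the corresponding supremum. Choose a cutoff $\chi_t \in C_c^{\infty}(D_t^{\epsilon})$ with $\chi_t \equiv 1$ on $\widetilde{D}_t^{\epsilon}$ and $|\bar\partial \chi_t|$ uniformly bounded; then $F_t := \chi_t f_t$ extends by zero to $D_t$, agrees with $f_t$ on a neighbourhood of $p_t$, and $\bar\partial F_t = (\bar\partial \chi_t) f_t$ is supported in $D_t^{\epsilon} \setminus \widetilde{D}_t^{\epsilon}$, the region where $|h_{\epsilon}| \leq \exp(-c_0^{\epsilon} d(t)^{1/(1+\epsilon)^2})$. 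Apply Theorem \ref{Hormander} on the pseudoconvex domain $D_t$ with the plurisubharmonic weight
\[
\psi_t(z) = -2k(t)\log|h_{\epsilon}(z)| + 2(n + K) \log|z - p_t|,
\]
where $K \in \{2, 3\}$ is chosen to capture the derivative order at $p_t$ that is relevant to the functional under consideration, and $k(t) \to \infty$ slowly as $t \to 0^+$. Pluriharmonicity of $\log|h_{\epsilon}|$ on $\{\operatorname{Re} z_1 < 0\}$ keeps $\psi_t$ plurisubharmonic. The singularity of $\psi_t$ at $p_t$ forces the resulting $u_t$ to vanish there to order at least $K - 1$, so that $\tilde f_t := F_t - u_t$ is holomorphic on $D_t$ with $\partial^{\alpha} \tilde f_t(p_t) = \partial^{\alpha} f_t(p_t)$ for all $|\alpha| < K$.

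The main obstacle is the quantitative estimate $\|u_t\|_{L^2(D_t)} = o(1) \|f_t\|_{L^2(D_t^{\epsilon})}$. This rests on the geometric gap $d(t)^{1/(1+\epsilon)} = o\bigl(c_0^{\epsilon} d(t)^{1/(1+\epsilon)^2}\bigr)$ as $t \to 0^+$, which makes the weight $|h_{\epsilon}|^{2k(t)}$ exponentially smaller on $\operatorname{supp}(\bar\partial \chi_t)$ than at $p_t$; balancing $k(t)$ against the polynomial blow-up of $|z - p_t|^{-2(n + K)}$ on $\operatorname{supp}(\bar\partial \chi_t)$ (which has positive distance from $p_t$) yields the needed decay of the right-hand side of the Hörmander inequality. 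Using $\tilde f_t / \|\tilde f_t\|_{L^2(D_t)}$ as a test function in the sup defining $\lambda_{D_t}$, $I_{D_t}$, $M_{D_t}$, or $N_{D_t}$, and invoking Proposition \ref{Krantz-Yu}(1) together with the Bergman-kernel localization to pass from $\bar G_{D_t^{\epsilon}}^{-1}$ (respectively $\operatorname{ad} G_{D_t^{\epsilon}}$, $\kappa_{D_t^{\epsilon}}^{n-1}$, $\kappa_{D_t^{\epsilon}}^{2n-2}$) to the corresponding objects on $D_t$, gives the lower bound $(1 - \eta')$ for each ratio; letting $\eta \to 0$ completes the proof.
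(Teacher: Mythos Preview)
Your overall architecture matches the paper's, but the way you invoke H\"ormander does not work as written and leaves a genuine gap.

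In the paper the peak function $h_\epsilon^k$ is used as a \emph{multiplier} on the test function: one solves $\bar\partial u=\bar\partial(X_t\,h_\epsilon^k f)$ on $D_t$ with the simple weight $\phi(z)=(2n+8)\log|z-p_t|$. Since $|h_\epsilon|\le a_t:=\exp(-c_0^\epsilon d(t)^{1/(1+\epsilon)^2})$ on $\operatorname{supp}(\bar\partial X_t)$, the right-hand side picks up a factor $a_t^{2k}$, while on the left $e^{-\phi}(1+|z|^2)^{-2}\ge C_0$ uniformly on the bounded domain $D_t$, giving $\|u\|_{L^2(D_t)}\le C\,a_t^{k}\,d(t)^{-(n+5)}$. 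With $k\sim d(t)^{-c_\epsilon}$, $1/(1+\epsilon)^2<c_\epsilon<1/(1+\epsilon)$, one gets simultaneously $a_t^{k}/d(t)^{n+5}\to 0$ and $|h_\epsilon^k(p_t)|\to 1$, so $g_k=X_t h_\epsilon^k f-u$ serves as the transplanted competitor. You instead move $h_\epsilon^k$ into the weight, taking $\psi_t=-2k(t)\log|h_\epsilon|+2(n+K)\log|z-p_t|$. Two things go wrong. First, $h_\epsilon$ (hence $\psi_t$) is only defined for $z_1\notin[0,\infty)$, and $D_t=\gamma_t(R_t^1T_t^1(D))$ is just an affine image of the bounded domain $D$; there is no reason for $D_t\subset\{\operatorname{Re}z_1<0\}$, so Theorem~\ref{Hormander} cannot be applied on $D_t$ with this weight. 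Second, and more seriously, even on the portion where $\psi_t$ makes sense the estimate only controls $\int_{D_t}|u_t|^2\,|h_\epsilon|^{2k(t)}|z-p_t|^{-2(n+K)}(1+|z|^2)^{-2}$. On any region where $|z_1|$ stays bounded away from $0$ one has $|h_\epsilon|\le c_1$ for a fixed $c_1<1$, hence $|h_\epsilon|^{2k(t)}\le c_1^{2k(t)}\to 0$, and the weighted bound gives no control of $\|u_t\|_{L^2(D_t)}$; indeed $a_t\to 1$ while $c_1<1$, so $(a_t/c_1)^{k(t)}\to\infty$. Since $\tilde f_t=\chi_t f_t-u_t$ equals $-u_t$ off $\operatorname{supp}\chi_t$, the claimed estimate $\|u_t\|_{L^2(D_t)}=o(1)$ is exactly what is needed and exactly what fails. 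The remedy is the paper's: multiply by $h_\epsilon^{k}$ rather than weight by it.

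A smaller issue: your upper bound $I_{D_t}\le(1+o(1))I_{D_t^\epsilon}$ does not follow from Proposition~\ref{Krantz-Yu}(1) in the direction you use it. That inequality says $v\,\overline G_{D_t}^{-1}v^*\ge(\kappa_{D_t}/\kappa_{D_t^\epsilon})\,v\,\overline G_{D_t^\epsilon}^{-1}v^*$, which is a \emph{lower} bound on the $D_t$-quantity and is precisely what the paper uses for the $\liminf$. For the $\limsup$ the paper instead invokes the identity $I_D=M_D/(\kappa_D^{\,n}J_D)$ together with the monotonicity of $M_D$ and the already-established localizations of $\kappa$ and $J$; the localization of $M$ is then recovered from this same identity.
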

\begin{proof}
Let $\epsilon > 0$. Consider a cut-off function $X \in C_c^\infty(B_1(0,1) \times B_{n}(0, 2))$ satisfying the conditions
\begin{align*}
    X = 1 \quad \text{on} \quad B_1(0, 1/2) \times B_{n}(0,1), \quad \text{and} \quad 0 \leq X \leq 1. 
\end{align*}
Define
\begin{align*}
    X_t(z) = X\left(\frac{z_1}{d(t)^{\frac{1}{(1+\epsilon)^2}}}, \frac{z'}{d_1^{\epsilon}(t)}\right) \text{ for } z \in (z_1, z') \in \mathbb{C}^{n + 1}.
\end{align*}
Let $ z \in \widetilde{D}_t^\epsilon$.
Then, $|z_1| \leq d(t)^{\frac{1}{(1+\epsilon)}}$ and
there exist $t_0(\epsilon) > 0$ such that
\begin{align*}
    \frac{|z_1|}{d(t)^{\frac{1}{(1+\epsilon)^2}}} \leq d(t)^\frac{\epsilon}{(1+\epsilon)^2} < 1/2, \text{  and  }
    \frac{|z'|}{d_1^\epsilon(t)} < 1,
\end{align*}
for each $0<t<t_0(\epsilon)$. Therefore, $X_t = 1$ on $\widetilde{D}_t^\epsilon$. Define
\begin{align*}
{A}_t = W \cap \left\{z \in \mathbb{C}^{n+1} : |\operatorname{Re}z_1| < d(t)^\frac{1}{(1+\epsilon)^2} \right\},
\end{align*}
where $W$ is defined at the beginning of the section \ref{Cone}.
Note that $X_t \in C_c^\infty(A_t)$, and $A_t \cap D_t = D_t^\epsilon$. Let $f \in A^2(D_t^\epsilon)$ and $k \in \mathbb{N}$. Set
\begin{align*}
    \alpha &= \overline{\partial}\left(fX_t h_{\epsilon}^k\right) \text{ on } D_t, \text{ and}\\
    \phi(z) &= (2n+8)\operatorname{log}|z - (-d(t),0)|, \text{ for } z \in \mathbb{C}^{n + 1}.   
\end{align*}
By using Theorem \ref{Hormander}, we have $u \in L^2_{\operatorname{loc}}(D_t)$ satisfying
\begin{align*}
    \overline{\partial}u = \alpha \text{ on } D_t,
\end{align*}and
\begin{align}\label{Hor inequality 2}
\int_{D_t} \frac{|u|^2e^{-\phi}}{(1+ |z|^2)^2} \, \mathrm{d}V \leq \int_{D_t}|\alpha|^2 e^{-\phi} \, \mathrm{d}V.
\end{align}
Now,
\begin{align}\label{estimate alpha 2}
    \int_{D_t}|\alpha|^2 e^{-\phi} \, \mathrm{d}V &= \int_{D_t\cap A_t}\frac{|f|^2|\overline{\partial}X_t|^2|h_{\epsilon}|^{2k}}{|z - (-d(t), 0)|^{(2n+8)}} \, \mathrm{d}V \nonumber\\
    &\leq \frac{Ca_t^{2k}}{d(t)^2}\int_{D_t^\epsilon \setminus 
    \widetilde{D}_t^\epsilon} \frac{|f|^2}{|z - (-d(t), 
 0)|^{(2n + 8)}}\, \mathrm{d}V \nonumber\\
    &\leq \frac{C a_t^{2k}}{d(t)^{(2n + 10)}} \int_{D_t^\epsilon \setminus \widetilde{D}_t^\epsilon}|f|^2 \, \mathrm{d}V \nonumber\\
    & \leq \frac{Ca_t^{2k}}{d(t)^{(2n+10)}} \norm{f}_{L^2(D_t^\epsilon)}^2,
\end{align}
for each $0 < t < t_0(\epsilon)$, where $t_0(\epsilon)$ is small positive number, $a_t = \operatorname{exp}\left(-c_0^{\epsilon}d(t)^{{1}/{(1+\epsilon)^2}}\right)$, and $C > 0$, which may change at each step, depends on both the first order derivatives of $X$ and $\epsilon > 0$.
Now we estimate the left side of \eqref{Hor inequality 2}.
\begin{align}\label{estimate u_0 2}
   \int_{D_t} \frac{|u|^2e^{-\phi}}{(1+ |z|^2)^2} \, \mathrm{d}V =\int_{D_t} \frac{|u|^2}{(1+|z|^2)^2 \, |z - (-d(t),0)|^{(2n+8)}} \, \mathrm{d}V \geq C_0 \norm{u}_{L^2(D_t)}^2,
\end{align}
where $C_0 > 0$
depends only on the domain $D$.
From \eqref{Hor inequality 2},
\eqref{estimate alpha 2}, and \eqref{estimate u_0 2}, we have
\begin{align}\label{res align}
    \norm{u}_{L^2(D_t)}^2 \leq \frac{C a_t^{2k}}{C_0d(t)^{(2n + 10)}}\norm{f}_{L^2(D_t^\epsilon)}^2, 
\end{align}
and 
\begin{align}\label{19}
    \int_{D_t} \frac{|u|^2}{(1+|z|^2)^2 \, |z - (-d(t),0)|^{(2n+8)}} \, \mathrm{d}V \leq \frac{C a_t^{2k}}{d(t)^{(2n + 10)}}\norm{f}_{L^2(D_t^\epsilon)}^2.
\end{align}
From the above equation \eqref{19}, we get
\begin{align}
        \frac{\partial^{|\alpha| + |\beta|} u}{\partial z^{\alpha} \partial \bar{z}^{\beta}}(-d(t), 0) = 0 \text{ for all multi-indices $\alpha, \beta$ with $|\alpha| + |\beta| \leq 2$.}
\end{align}
Set
\begin{align}\label{641}
    g_k = {X_t f h_{\epsilon}^k - u} \text{ on $D_t$}.
\end{align}
Then $g_k \in A^2(D_t)$. Moreover,
\begin{align*}
        \norm{g_k}_{L^2(D_t)} &\leq \norm{X_t f h_{\epsilon}^k}_{L^2(D_t)} + \norm{u}_{L^2(D_t)}\\ 
        &\leq \norm{f }_{L^2(D_t^{\epsilon})} + \norm{u}_{L^2(D_t)}\\
        &\leq \left( 1+ \sqrt{\frac{C}{C_0}} \frac{a_t^k}{d(t)^{n + 5}}\right)\norm{f}_{L^2(D_t^{\epsilon})}.
\end{align*}
Let $f \in A^2(D_t^{\epsilon})$ be the maximum function for $\lambda_{D_t^{\epsilon}}^{\ell}(-d(t), 0)$, where $\ell \in \{1, \dots, n + 1\}$, i.e.,
\begin{align*}
    \lambda_{D_t^{\epsilon}}^{\ell}(-d(t), 0) = \left|\frac{\partial f}{\partial z_{\ell}} (-d(t), 0)\right|^2,
\end{align*}
$f(-d(t), 0) = 0, \, \frac{\partial f}{\partial z_j}(-d(t), 0) = 0$, for $ 1 \leq j < \ell$, and $\norm{f}_{L^2(D_t^{\epsilon})} = 1$.

For any $z \in D_t$, set $\mathcal{G}_k(z) = g_k(z)/\norm{g_k}_{L^2(D_t)} \in A^2(D_t)$. Then,
\begin{align*}
    \mathcal{G}_{k}(-d(t), 0) = 0, \, \frac{\partial \mathcal{G}_k}{\partial z_j}(-d(t), 0) = 0, \text{ for } 1 \leq j < \ell, \text{ and } \norm{\mathcal{G}_k}_{L^2(D_t)} = 1. 
\end{align*}
Therefore, we have
\begin{align*}
    \lambda_{D_t}^{\ell}(-d(t), 0) &\geq \left|\frac{\partial \mathcal{G}_k}{\partial z_{\ell}}(-d(t), 0)\right|^2\\
    &= \norm{g_k}_{L^2(D_t)}^{-2} |h_{\epsilon}^k(-d(t), 0)|^{2} \lambda_{D_t^{\epsilon}}^{\ell}(-d(t), 0)
\end{align*}
Hence
\begin{align*}
    \frac{\lambda_{D_t}^{\ell}(-d(t), 0)}{\lambda_{D_t^{\epsilon}}^{\ell}(-d(t), 0)} \geq \frac{|h^k_{\epsilon}(-d(t), 0)|^2}{\left( 1+ \sqrt{\frac{C}{C_0}} \frac{a_t^k}{d(t)^{n + 5}}\right)^2},
\end{align*}
for each $0 < t < t_0(\epsilon)$. For every $\epsilon > 0$, there exist $c_{\epsilon} > 0$, such that
\begin{align}
    \frac{1}{(1 + \epsilon)^2} < c_{\epsilon} < \frac{1}{(1+\epsilon)}.
\end{align}
Choosing $k$ is the greatest integer of $d(t)^{-c_{\epsilon}}$, we get
\begin{align}\label{671}
    \liminf_{t \to 0^{+}}\frac{\lambda_{D_t}^{\ell}(-d(t), 0)}{\lambda_{D_t^{\epsilon}}^{\ell}(-d(t), 0)} \geq 1.
\end{align}
By using \eqref{671} and the fact that $\lambda^{\ell}$ is monotonically decreasing with respect to the domain, we get
\begin{align}
    \lim_{t \to 0^{+}} \frac{\lambda_{D_t}^{\ell}(-d(t), 0)}{\lambda_{D_t^{\epsilon}}^{\ell}(-d(t), 0)} = 1,
\end{align}
for each $\ell \in \{1, \dots, n + 1\}$.
By using \cite{Ravi}*{Lemma $3.2$ and Proposition $2.1$}, we get the localization of the Bergman kernel, i.e.,
\begin{align}\label{loc of Bergman kernel}
    \lim_{t \to 0^{+}} \frac{\kappa_{D_t}(-d(t), 0)}{\kappa_{D_t^{\epsilon}}(-d(t), 0)} = 1.
\end{align} 
Therefore, $J$ also satisfies the localization.

Next, let $f \in A^2(D_t^{\epsilon})$ be the maximum function for $I_{D_t^{\epsilon}}((-d(t), 0); \xi(t))$, i.e., 
\begin{align*}
   I_{D_t^{\epsilon}}((-d(t), 0); \xi(t)) = \xi(t)f''(-d(t), 0)\overline{G}^{-1}_{D
_t^{\epsilon}}(-d(t), 0) \overline{f''(-d(t), 0)}\xi(t)^{*},
\end{align*}
$f(-d(t), 0) = 0, \, f'(-d(t), 0) = 0$, and $\norm{f}_{L^2(D_t^{\epsilon})} = 1$.

For any $z \in D_t$, set $G_k(z) = g_k(z) / \norm{g_k}_{L^2(D_t)} \in A^2(D_t)$, where $g_k$ is as in \eqref{641}. Then, 
\begin{align*}
    G_k(-d(t), 0) = G_K'(-d(t), 0) = 0 \text{ and } \norm{G_k}_{L^2(D_t)} = 1.
\end{align*}
Therefore, by using Proposition \ref{Krantz-Yu} in the third step below, we have
\begin{align*}
        I_{D_t}((-d(t), 0); \xi(t)) &\geq \xi(t)G_k''(-d(t), 0)\overline{G}^{-1}_{D_t}(-d(t), 0)\overline{G_k''(-d(t), 0)}\xi(t)^{*}\\
        &= \norm{g_k}_{L^2(D_t)}^{-2}|h_{\epsilon}^k(-d(t), 0)|^2 \xi(t)f''(-d(t), 0)\overline{G}_{D_t}^{-1}(-d(t), 0)\overline{f''(-d(t), 0)}\xi(t)^*\\
        &\geq \norm{g_k}_{L^2(D_t)}^{-2}|h_{\epsilon}^k(-d(t), 0)|^2 \frac{\kappa_{D_t}(-d(t), 0)}{\kappa_{D_t^{\epsilon}}(-d(t), 0)}I_{D^{\epsilon}_t}((-d(t), 0); \xi(t)).
\end{align*}
Hence
\begin{align*}
     \frac{ I_{D_t}((-d(t), 0); \xi(t))}{I_{D^{\epsilon}_t}((-d(t), 0); \xi(t))} \geq \frac{|h^k_{\epsilon}(-d(t), 0)|^2}{\left( 1+ \sqrt{\frac{M}{M_0}} \frac{a_t^k}{d(t)^{n + 5}}\right)^2} \frac{\kappa_{D_t}(-d(t), 0)}{\kappa_{D_t^{\epsilon}}(-d(t), 0)},
\end{align*}
for each $0 < t < t_0(\epsilon)$.
For every $\epsilon > 0$, there exist $c_{\epsilon} > 0$, such that
\begin{align}
    \frac{1}{(1 + \epsilon)^2} < c_{\epsilon} < \frac{1}{(1+\epsilon)}.
\end{align}
Choosing $k$ is the greatest integer of $d(t)^{-c_{\epsilon}}$ and using \eqref{loc of Bergman kernel}, we get
\begin{align}\label{96}
\liminf_{t \to 0^+} {\frac{I_{D_t}(\left(-d(t), 0\right); \xi(t))}{I_{D_t^\epsilon}\left((-d(t), 0); \xi(t)\right)}} \geq 1,
\end{align}for each $\epsilon > 0$.
On the other hand,
\begin{align}\label{M}
    I_{D}(z; u) = \frac{M_{D}(z; u)}{\kappa_{D}^n(z) J_{D}(z)}, \quad \forall z \in D, u \in \mathbb{C}^{n}.
\end{align}   
Since $M_{D}$ is monotonically decreasing with respect to domain, 
\begin{align*}
    \frac{I_{D_t}(\left(-d(t), 0\right); \xi(t))}{I_{D_t^\epsilon}\left((-d(t), 0); \xi(t)\right)} \leq \frac{\kappa^n_{D_t^{\epsilon}}(-d(t), 0)}{\kappa^n_{D_t}(-d(t), 0)} \frac{J_{D_t^{\epsilon}}(-d(t), 0)}{J_{D_t}(-d(t), 0)}.
\end{align*}
By using localization of $J$ and the Bergman kernel \eqref{loc of Bergman kernel}, we get
\begin{align}\label{98}
    \limsup_{t \to 0^+} {\frac{I_{D_t}(\left(-d(t), 0\right); \xi(t))}{I_{D_t^\epsilon}\left((-d(t), 0); \xi(t)\right)}}
    \leq 1,
\end{align}
for each $\epsilon > 0$. We now use \eqref{96} and \eqref{98}, we get
\begin{align*}
    \lim_{t \to 0^+} {\frac{I_{D_t}(\left(-d(t), 0\right); \xi(t))}{I_{D_t^\epsilon}\left((-d(t), 0); \xi(t)\right)}} = 1,
\end{align*}
for each $\epsilon > 0$.

The localization of $M$ follows from
\eqref{M} and the localization of $\kappa, \, J, \, \text{and } I$.

Similarly, one can prove the localization of $N$. 
\end{proof}
In the following lemma, we compare the extremal functions of $D_t$ to those of $\mathbb{D} \times B_n(0, 1)$.
\begin{lem}\label{3.6}
     Let $D \subset \mathbb{C}^{n+1}$ be as in \eqref{*}. Suppose $D$ is pseudoconvex and $q$ is an $(\alpha, N)$-cone type curve approaching $0$. Then, for $\xi(t) \in \mathbb{C}^{n + 1}\setminus \{0\}$,
     \begin{align*}
          \lim_{t \to 0^+} \frac{(2d(t))^{2(n + 2)}(d^{*}(t))^{2n(n + 2)}\lambda_{D_t}(-d(t), 0)}{\lambda_{\mathbb{D} \times B_n(0, 1)}(0)} &= 1,\\
          \lim_{t \to 0^+} \frac{(2d(t))^{2(n + 2)}(d^{*}(t))^{2n(n + 2)}M_{D_t}((-d(t), 0); \xi(t))}{ M_{\mathbb{D} \times B_n(0, 1)}(0; (f \circ \Sigma)'(-d(t), 0) \xi(t))} &= 1, \, \text{and}\\
          \lim_{t \to 0^+} \frac{(2d(t))^{2(2n + 3)}(d^{*}(t))^{2n(2n + 3)} N_{D_t}(-d(t), 0)}{N_{\mathbb{D} \times B_n(0, 1)}(0)} &= 1.
     \end{align*}
\end{lem}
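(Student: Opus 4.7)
The plan is to push the extremal functions on $D_t$ to the model domain $\mathbb{D}\times B_n(0,1)$ via the explicit biholomorphism $f\circ\Sigma$, and then sandwich by monotonicity using the inclusions of Lemma~\ref{ScalingLemma2} together with the localization already proved in Lemma~\ref{localisation lemma 4}. The map $\Sigma$ has Jacobian $\operatorname{diag}\bigl(d(t)^{-1},d^*(t)^{-1},\dots,d^*(t)^{-1}\bigr)$, while $f'_1(-1)=\tfrac12$; hence at the base point
\begin{align*}
\bigl|\det J_{\mathbb{C}}(f\circ\Sigma)(-d(t),0)\bigr|^{-2(n+2)}=(2d(t))^{2(n+2)}(d^*(t))^{2n(n+2)}.
\end{align*}
Since $(f\circ\Sigma)(-d(t),0)=0$, the biholomorphic transformation rules for $\lambda$, $M$, and $N$ recalled in Subsection~\ref{2.2} (applied with ambient dimension $n+1$, so the exponents become $2(n+2)$, $2(n+2)$, $2(2n+3)$) give
\begin{align*}
(2d(t))^{2(n+2)}(d^*(t))^{2n(n+2)}\,\lambda_{D_t^\epsilon}(-d(t),0)=\lambda_{(f\circ\Sigma)(D_t^\epsilon)}(0),
\end{align*}
and analogously for $M$ (with direction $(f\circ\Sigma)'(-d(t),0)\xi(t)$) and for $N$.

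Next I would apply Lemma~\ref{ScalingLemma2} and the monotonicity statement in Proposition~\ref{Krantz-Yu}(2) to sandwich the right-hand side between the corresponding extremal values on $(1-\delta)\mathbb{D}\times B_n(0,1)$ and on $\mathbb{D}\times B_n(0,d_2^\epsilon(t)/d^*(t))$. An explicit scaling argument on the product domain, based on the closed-form Bergman geometry of Lemma~\ref{Formula for kernel}, yields for instance
\begin{align*}
\lambda_{\mathbb{D}\times B_n(0,r)}(0)=r^{-2n(n+2)}\,\lambda_{\mathbb{D}\times B_n(0,1)}(0),
\end{align*}
and the analogous formulas for $M$ and $N$; a similar scaling handles the shrinking polydisc factor $(1-\delta)\mathbb{D}$. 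Combined with the bound $\limsup_{t\to 0^+}d_2^\epsilon(t)/d^*(t)\leq (1+\epsilon)^{1/m}$ from Lemma~\ref{3.1}, letting $t\to 0^+$ and then $\epsilon,\delta\to 0^+$ forces both sides of the sandwich to the common value $\lambda_{\mathbb{D}\times B_n(0,1)}(0)$, hence $\lambda_{(f\circ\Sigma)(D_t^\epsilon)}(0)/\lambda_{\mathbb{D}\times B_n(0,1)}(0)\to 1$.

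Composing with the localization $\lambda_{D_t}(-d(t),0)/\lambda_{D_t^\epsilon}(-d(t),0)\to 1$ from Lemma~\ref{localisation lemma 4} then gives the first claim. The arguments for $M$ and $N$ are identical modulo the powers $2(n+2)$ and $2(2n+3)$, except that for $M$ one also needs joint continuity of $M_{\mathbb{D}\times B_n(0,r)}(0;\eta)$ in $(r,\eta)$ near $r=1$, so that the varying direction $(f\circ\Sigma)'(-d(t),0)\xi(t)$ does not obstruct the limit; this again follows from the explicit scaling of $M$ on the product domain together with Lemma~\ref{Formula for kernel}.

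The main obstacle, as I see it, lies in the uniform collapse of the two-sided sandwich: one must interchange the iterated limits $t\to 0^+$, $\epsilon\to 0^+$, $\delta\to 0^+$, while keeping track both of the explicit radius scaling of $\lambda$, $M$, $N$ on $\mathbb{D}\times B_n(0,r)$ and of the direction-dependence for $M$. Once the scaling identities on $\mathbb{D}\times B_n(0,r)$ are written down explicitly, the remaining work is bookkeeping of exponents and a routine application of Lemma~\ref{3.1}.
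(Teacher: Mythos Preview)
Your proof is correct and follows essentially the same route as the paper: transform via $f\circ\Sigma$, sandwich using the inclusions of Lemma~\ref{ScalingLemma2} and the monotonicity in Proposition~\ref{Krantz-Yu}, scale to the model, and invoke the localization of Lemma~\ref{localisation lemma 4}. The only minor difference is that, for the lower bound on $M$, the paper sidesteps your joint-continuity/direction issue by one extra monotone inclusion, passing from $\mathbb{D}\times B_n\bigl(0,d_2^\epsilon(t)/d^*(t)\bigr)$ to the uniform dilation $\bigl(d_2^\epsilon(t)/d^*(t)\bigr)\bigl(\mathbb{D}\times B_n(0,1)\bigr)$ before scaling, so that the direction is merely multiplied by a scalar and factors out cleanly.
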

\begin{proof}
    Since
    \begin{align}
        M_{D_t^{\epsilon}}((-d(t), 0); \xi(t)) = {(2d(t))^{-2(n + 2)}d^*(t)^{-2n(n + 2)} M_{f \circ \Sigma (D_t^{\epsilon})}(0; (f \circ \Sigma)'(-d(t), 0)\xi(t))}.
    \end{align}
    By using Lemma \ref{ScalingLemma2} and monotonicity of $M$ with respect to domain (see Proposition \ref{Krantz-Yu}), we have
    \begin{align}
        M_{f\circ \Sigma(D_t^{\epsilon})}(0; (f \circ \Sigma)'(-d(t),0)\xi(t)) &\leq M_{(1-\delta)\mathbb{D} \times B_n(0,1)}(0; (f \circ \Sigma)'(-d(t),0)\xi(t))\nonumber\\
        &= \frac{M_{\mathbb{D} \times B_n(0,1)}(0; (f \circ \Sigma)'(-d(t),0)\xi(t))}{(1 - \delta)^{2\left((n + 1)(n + 2) + 1 \right)}},
    \end{align}
    which implies
    \begin{align}\label{109}
        \frac{M_{f\circ \Sigma(D_t^{\epsilon})}(0; (f \circ \Sigma)'(-d(t),0)\xi(t))}{M_{\mathbb{D} \times B_n(0,1)}(0; (f \circ \Sigma)'(-d(t),0)\xi(t))} \leq (1 - \delta)^{-2(n^2 + 3n + 3)},
    \end{align}
    for each $0 < t < t_0(\delta, \epsilon)$.

    Again using Lemma \ref{ScalingLemma2}, the monotonicity of $M$ with respect to domain, and the inequality $d^{*}(t) \leq d_2^{\epsilon}(t)$ (follows from the definition), we have
    \begin{align}
        M_{(f\circ \Sigma(D_t^{\epsilon}))}(0, ((f \circ \Sigma)'(-d(t),0)\xi(t))) &\geq M_{(\mathbb{D} \times B_n(0, d_2^{\epsilon}(t)/d^{*}(t)))}{(0, ((f \circ \Sigma)'(-d(t),0))\xi(t)))}\nonumber\\
        &\geq M_{((d_2^{\epsilon}(t)/d^{*}(t))\mathbb{D} \times B_n(0, 1))}{(0, ((f \circ \Sigma)'(-d(t),0))\xi(t)))}\nonumber\\
        &= \frac{M_{(\mathbb{D} \times B_n(0,1))}{(0, ((f \circ \Sigma)'(-d(t),0))\xi(t)))}}{\left(\frac{d_2^{\epsilon}(t)}{d^{*}(t)}\right)^{2(n^2 + 3n + 3)}}\nonumber,
    \end{align}
    which implies
    \begin{align}\label{110}
        \frac{M_{f\circ \Sigma(D_t^{\epsilon})}(0; (f \circ \Sigma)'(-d(t),0)\xi(t))}{M_{\mathbb{D} \times B_n(0,1)}{(0; (f \circ \Sigma)'(-d(t),0)\xi(t))}} \geq \left(\frac{d^*(t)}{d_2^{\epsilon}(t)}\right)^{2(n^2 + 3n + 3)},
    \end{align}
    for each $0 < t < t_0(\delta, \epsilon)$.
    Now,
    \begin{align}\label{111}
        &\frac{(2d(t))^{2(n + 2)} d^*(t)^{2n(n + 2)}{M_{D_t}((-d(t),0); \xi(t))}}{M_{\mathbb{D} \times B_n(0,1)}(0; (f \circ \Sigma)'(-d(t),0)\xi(t))}  \nonumber\\
        &\,\,\,\,\,\,\,\,\,=\frac{{{M_{f\circ \Sigma(D_t^{\epsilon})}(0; (f \circ \Sigma)'(-d(t),0)\xi(t))}}}{M_{\mathbb{D} \times B_n(0,1)}(0; (f \circ \Sigma)'(-d(t),0)\xi(t))} \cdot \frac{(2d(t))^{2(n + 2)}d^*(t)^{2n(n + 2)}M_{D_t}((-d(t),0); \xi(t))}{{{M_{f\circ \Sigma(D_t^{\epsilon})}(0; (f \circ \Sigma)'(-d(t),0)\xi(t))}}}\nonumber\\
        &\,\,\,\,\,\,\,\,\,=\frac{{{M_{f\circ \Sigma(D_t^{\epsilon})}(0; (f \circ \Sigma)'(-d(t),0)\xi(t))}}}{M_{\mathbb{D} \times B_n(0,1)}(0; (f \circ \Sigma)'(-d(t),0)\xi(t))} \cdot \frac{M_{D_t}((-d(t),0); \xi(t))}{{{M_{D_t^{\epsilon}}((-d(t), 0); \xi(t))}}}.
    \end{align}
    By Lemma \ref{localisation lemma 4}, we have
    \begin{align}\label{112}
        \lim_{t \to 0^+} \frac{M_{D_t}((-d(t),0); \xi(t))}{M_{D_t^{\epsilon}}((-d(t),0); \xi(t))} = 1,
    \end{align}
    for $\epsilon > 0$.

    From \eqref{111}, \eqref{109} and \eqref{112}, we have
    \begin{align}
        \limsup_{t \to 0^+}\frac{(2d(t))^{2(n + 1)} d^*(t)^{2n(n + 1)}{M_{D_t}\left((-d(t),0); \xi(t)\right)}}{M_{\mathbb{D} \times B_n(0,1)}\left(0; (f \circ \Sigma)'(-d(t),0)\xi(t)\right)} \leq {(1 - \delta)^{-2(n^2 + 3n + 3)}} ,
    \end{align}
    for each $\delta > 0$. Hence
    \begin{align}
        \limsup_{t \to 0^+}\frac{(2d(t))^{2(n + 1)} d^*(t)^{2n(n + 1)}{M_{D_t}((-d(t),0); \xi(t))}}{M_{\mathbb{D} \times B_n(0,1)}(0; (f \circ \Sigma)'(-d(t),0)\xi(t))} \leq {1}.
    \end{align}
    Similiarly, from \eqref{111}, \eqref{110}, \eqref{112} and Lemma \ref{3.1}, we have
    \begin{align}
        \liminf_{t \to 0^{+}} \frac{(2d(t))^{2(n + 1)} d^*(t)^{2n(n + 1)}{M_{D_t}((-d(t),0); \xi(t))}}{M_{\mathbb{D} \times B_n(0,1)}(0; (f \circ \Sigma)'(-d(t),0)\xi(t))}  \geq \frac{1}{(1 + \epsilon)^{2(n^2 + 3n + 3)/m}},
    \end{align}
    for $\epsilon > 0$. Hence
    \begin{align}
        \liminf_{t \to 0^{+}} \frac{(2d(t))^{2(n + 1)} d^*(t)^{2n(n + 1)}{M_{D_t}((-d(t),0); \xi(t))}}{M_{\mathbb{D} \times B_n(0,1)}(0; (f \circ \Sigma)'(-d(t),0)\xi(t))}  \geq 1.
    \end{align}
    So, we have
    \begin{align}
         \lim_{t \to 0^+} \frac{(2d(t))^{2(n + 1)} d^*(t)^{2n(n + 1)}{M_{D_t}((-d(t),0); \xi(t))}}{M_{\mathbb{D} \times B_n(0,1)}(0; (f \circ \Sigma)'(-d(t),0)\xi(t))} = 1,
    \end{align}
    for $\xi(t) \in \mathbb{C}^{n + 1} \setminus \{0\}$.

    Similarly, we have 
     \begin{align*}
          \lim_{t \to 0^+} \frac{(2d(t))^{2(n + 2)}(d^{*}(t))^{2n(n + 2)}\lambda_{D_t}(-d(t), 0)}{\lambda_{\mathbb{D} \times B_n(0, 1)}(0)} &= 1, \text{ and}\\
          \lim_{t \to 0^+} \frac{(2d(t))^{2(2n + 3)}(d^{*}(t))^{2n(2n + 3)}N_{D_t}(-d(t), 0)}{N_{\mathbb{D} \times B_n(0, 1)}(0)} &= 1.
     \end{align*}
\end{proof}
We now use Lemma \ref{Formula for kernel} and Lemma \ref{3.6} to prove the Theorem \ref{curvatures}.
\begin{proof}[Proof of Theorem \ref{curvatures}] 
Let $\xi \in \mathbb{C}^{n + 1} \setminus \{0\}$. We denote the complex normal and complex tangential components of $\xi$ with respect to $\pi(q(t))$ as $\xi_{N,t}$ and $\xi_{T,t}$, respectively. This results in the decomposition $\xi = \xi_{N,t} + \xi_{T,t}$, where $\xi_{T,t} \in T_{\pi(q(t))}^{\mathbb{C}}(bD)$ and $\xi_{N,t} \perp T_{\pi(q(t))}^{\mathbb{C}}(bD)$.

Expressing this in detail:
\begin{align}
     \xi_{N, t} &= \langle R^2_t \circ R^1_t(\xi), e_1\rangle (R_t^2 \circ R_t^1)^{-1}(e_1), \text{ and}\\
     \xi_{T,t} &= \sum_{j = 2}^{n + 1}\langle R^2_t \circ R^1_t(\xi), e_j\rangle (R_t^2 \circ R_t^1)^{-1}(e_j).
\end{align}
Furthermore, we have $|\xi_{N, t}| = |\langle R^2_t \circ R^1_t(\xi), e_1 \rangle|$, and $|\xi_{T,t}| = \sqrt{\sum_{j = 2}^{n + 1}|\langle R^2_t \circ R^1_t(\xi), e_j\rangle|^2}$.

Then,
\begin{align*}
    \frac{J_D(q(t))}{J_{\mathbb{D} \times B_n(0, 1)}(0)} &= \frac{J_{D_t}(-d(t), 0)}{J_{\mathbb{D} \times B_n(0, 1)}(0)} \quad (\text{since } D_t = \gamma_t \circ R_t^1 \circ T_t^1(D))\\
    &= \frac{\lambda_{D_t}(-d(t), 0)}{\lambda_{\mathbb{D} \times B_n(0,1)}(0)} \cdot \frac{\kappa_{\mathbb{D} \times B_n(0, 1)}^{n + 2}(0)}{\kappa_{D_t}^{n + 2}(-d(t), 0)} \quad (\text{from Proposition \ref{Pro 2.1}})\\
    &= \frac{(2d(t))^{2(n + 2)}(d^{*}(t))^{2n(n + 2)}\lambda_{D_t}(-d(t), 0)}{\lambda_{\mathbb{D} \times B_n(0,1)}(0)} \cdot \left(\frac{d(t)^{-2} d^{*}(t)^{-2n}}{4\pi \operatorname{vol}B_n(0, 1)\kappa_{D}(q(t))}\right)^{n + 2}
\end{align*}
By using Lemma \ref{3.6} and \cite{Ravi}*{Theorem $1.4$}, we get
\begin{align}
    \lim_{t \to 0^{+}} \frac{J_D(q(t))}{J_{\mathbb{D} \times B_n(0, 1)}(0)} = 1.
\end{align}
Similarly, we have  
    \begin{align}
            &\lim_{t \to 0^{+}} \frac{(n  + 1) - R_{D}(q(t) ; \xi)}{(n + 1) - R_{\mathbb{D} \times B_n(0, 1)}(0; (f \circ \Sigma)'(-d(t), 0) (R_t^2 \circ R_t^{1}(\xi)))} = 1,\\
            &\lim_{t \to 0^{+}} \frac{n(n + 1) - S_{D}(q(t))}{n(n + 1) - S_{\mathbb{D} \times B_n(0, 1)}(0)} = 1.
    \end{align}
    We now use Lemma \ref{Formula for kernel} to get the result, i.e.,
    \begin{align*}
        &\lim_{t \to 0^{+}} {J_{D}(q(t))} = \frac{2\pi^{n + 1} (n + 1)^n}{n!} ,\\
        &\lim_{t \to 0^{+}} {R_{D}(q(t); \xi)} = -1,\\
        &\lim_{t \to 0^{+}}{S_{D}(q(t))} = -(n + 1),
    \end{align*}
    for each $\xi \in \mathbb{C}^{n + 1} \setminus \{0\}.$
\end{proof}
\begin{proof}[Proof of Theorem \ref{1.4}]
The proof follows by using Theorem \ref{curvatures} and \cite{Ravi}*{Theorem $1.4$}.
\end{proof}
\subsection{Asymptotic behaviour of the Kobayashi metric}
In the following lemma, we localize the Kobayashi metric near the origin.  
\begin{lem}\label{localization of Kobayashi}
    Let $D$ be as in \eqref{*}. Suppsose $q$ is an $(\alpha, N)$-cone type curve approaching at $0$. Then, for $\xi \in \mathbb{C}^{n + 1} \setminus \{0\}$, we have
    \begin{align}
        \lim_{t \to 0^{+}} \frac{M^K_D(q(t); \xi)}{M^K_{D \cap W_t}(q(t); \xi)} = 1,
    \end{align}
    where $W_t = (\gamma_t \circ R_t^1 \circ T_t^1)^{-1}W$.
\end{lem}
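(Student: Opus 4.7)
The inclusion $D \cap W_t \hookrightarrow D$ is holomorphic, so the contraction property of the Kobayashi metric immediately gives $M_D^K(q(t); \xi) \leq M_{D \cap W_t}^K(q(t); \xi)$, whence the ratio in the statement is already bounded above by $1$. The remaining task is to establish the asymptotic lower bound, and for this Theorem~\ref{Graham theorem} applied with $U = D \cap W_t$ gives
\begin{align*}
\frac{M_D^K(q(t); \xi)}{M_{D \cap W_t}^K(q(t); \xi)} \,\geq\, \tanh c_D\bigl(q(t),\, D \setminus W_t\bigr),
\end{align*}
so the entire problem reduces to proving that $c_D(q(t), D \setminus W_t) \to \infty$ as $t \to 0^+$.

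To achieve this, the plan is to construct, for each small $t > 0$, a bounded holomorphic function $F_t \in \mathcal{O}(D, \mathbb{D})$ with $|F_t(q(t))| \to 1$ and $\sup_{z \in D \setminus W_t} |F_t(z)| \leq c$ for a fixed $c < 1$. Work in the normalized coordinates $D_t = \gamma_t \circ R^1_t \circ T^1_t(D)$: there $q(t)$ becomes $(-d(t), 0)$, and the exponentially flat defining function forces $\operatorname{Re} z_1 < 0$ on $D_t$ near the origin, so the local peak-type function $h_\epsilon(z) = \exp\bigl(-(-z_1)^{1/(1+\epsilon)}\bigr)$ already used in Lemma~\ref{localisation lemma 4} is holomorphic with $|h_\epsilon| < 1$ there. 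Pick a smooth cut-off $\chi$ equal to $1$ on a slightly smaller neighborhood of the origin and supported in $W$, and consider $\chi h_\epsilon^k$; then $\bar\partial(\chi h_\epsilon^k) = (\bar\partial \chi) h_\epsilon^k$ is supported in an annular region of $W$ where $|h_\epsilon|$ is bounded by some $c_1 < 1$ independent of $t$ and $k$. By H\"ormander's theorem (Theorem~\ref{Hormander}), together with the pseudoconvexity of $D$ and a plurisubharmonic weight with a logarithmic singularity at $(-d(t), 0)$, one solves $\bar\partial u = \bar\partial(\chi h_\epsilon^k)$ on $D_t$ with $u$ vanishing at $(-d(t), 0)$ and $\|u\|_{L^2(D_t)} \leq C\, c_1^k$; pulling $\chi h_\epsilon^k - u$ back by $(\gamma_t \circ R^1_t \circ T^1_t)^{-1}$ and normalizing produces $F_t$.

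With $\epsilon$ fixed small and $k$ chosen as the greatest integer in $d(t)^{-c_\epsilon}$ with $1/(1+\epsilon)^2 < c_\epsilon < 1/(1+\epsilon)$ (exactly as in Lemma~\ref{localisation lemma 4}), the main term satisfies $|h_\epsilon^k((-d(t), 0))| = \exp\bigl(-k\, d(t)^{1/(1+\epsilon)}\bigr) \to 1$, while $\|u\|_{L^\infty}$ stays exponentially subdominant, so $|F_t(q(t))| \to 1$ and $|F_t| \leq c < 1$ on $D \setminus W_t$. Substituting $F_t$ into the definition of $c^*_D$ and using the elementary identity
\begin{align*}
\left|\frac{a - b}{1 - \bar b a}\right|^2 \,=\, 1 - \frac{(1 - |a|^2)(1 - |b|^2)}{|1 - \bar b a|^2}
\end{align*}
applied with $a = F_t(q(t))$ and $b = F_t(w)$ shows that $c^*_D(q(t), w) \to 1$ uniformly for $w \in D \setminus W_t$; hence $c_D(q(t), D \setminus W_t) \to \infty$, and together with the Graham--Royden bound this finishes the proof. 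The principal obstacle is the simultaneous calibration: $|F_t(q(t))|$ must approach $1$ faster than the $\bar\partial$-correction grows at the approach point, while $|F_t|$ must stay uniformly below $1$ outside $W_t$; exponential flatness is precisely what reconciles these two requirements, exactly as in the parameter choice of Lemma~\ref{localisation lemma 4}.
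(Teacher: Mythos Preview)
Your approach differs substantially from the paper's and imports a hypothesis the lemma does not grant. You invoke H\"ormander's $\bar\partial$-theorem on $D_t$ (equivalently on $D$), which requires $D$ to be pseudoconvex; but the lemma takes $D$ only as in \eqref{*}, i.e.\ a smooth bounded domain with an exponentially flat boundary point, with no pseudoconvexity assumption---contrast Lemma~\ref{localisation lemma 4}, whose statement explicitly adds ``Further, assume that $D$ is pseudoconvex''. The Kobayashi localization is meant to hold in this generality (cf.\ the abstract and Corollary~1.6(1)), and the paper's proof achieves it without any $\bar\partial$-machinery: using the first inequality of Theorem~\ref{Graham theorem} with the constant $C_{D\cap W_t}(q(t))$, one passes to a fixed neighbourhood $W_0 \subset W_t$, applies Montel's theorem to extract a limiting disc $f\in\mathcal{O}(\mathbb{D},\overline{D})$ with $f(0)=0$ and $f(1/\ell)\in\overline{D\setminus W_0}$, and then reaches a contradiction via the open mapping theorem applied to $f_1$, since the local defining function forces $\operatorname{Re} f_1 \le 0$ on $f^{-1}(V)$ while $f_1(0)=0$. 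This normal-families argument is both shorter and hypothesis-free.

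Even if one grants pseudoconvexity, your construction of $F_t\in\mathcal{O}(D,\mathbb{D})$ is incomplete. The H\"ormander solution $u$ comes only with an $L^2$ bound, but to feed $F_t$ into the definition of $c_D^*$ you need $|F_t|<1$ everywhere on $D$, hence sup-norm control on $u$ right up to $bD$; this does not follow from the $L^2$ estimate, and the assertion that ``$\|u\|_{L^\infty}$ stays exponentially subdominant'' is unjustified. Moreover, the claim that $|h_\epsilon|\le c_1<1$ on the cut-off annulus with $c_1$ \emph{independent of $t$} is not correct: in the parallel construction of Lemma~\ref{localisation lemma 4} the corresponding bound is $a_t=\exp(-c_0^\epsilon d(t)^{1/(1+\epsilon)^2})$, which tends to $1$ as $t\to 0^+$, and it is only the choice $k\sim d(t)^{-c_\epsilon}$ that drives $a_t^k\to 0$.
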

\begin{proof}
    By using Theorem \ref{Graham
    theorem}, we have
    \begin{align}
        M_{D \cap W_t}^{K}(z; \xi) \leq C_{D \cap W_t}(z) M_{D}^{K}(z; \xi), \quad z \in D \cap W_t \text{ and } \xi \in \mathbb{C}^{n + 1} \setminus \{0\},
    \end{align}
    where $C_{D \cap W_t}(z) = \operatorname{sup}\{1/r > 1 : \exists f \in \mathcal{O}(\mathbb{D}, D) \text{ such that } f(0) = z, f(r) \in D \setminus W_t\}$. For sufficiently small $t > 0$, there exists a neighbourhood $W_0 \subset W_t$ of $0$. Let $V \subset \subset W_0$ be a neighbourhood of $0$. Then, for $z \in V$, we have
    \begin{align}
        C_{D \cap W_t}(z) \leq \operatorname{sup}\left\{1/r > 1 : \exists f \in \mathcal{O}\left(\mathbb{D}, D\right) \text{ such that } f(0) = z, f(r) \in D \setminus W_0\right\} =: C(z),
    \end{align}
    for sufficiently small $t > 0$.
By using Montel's
theorem, there exists $f \in \mathcal{O}(\mathbb{D}, \overline{D})$ such that $f(0) = z, \, f(1/C(z)) \in \overline{D \setminus W_0}$. For sufficiently small $t > 0$, we have $q(t) \in V$. 

To prove the Lemma, it is enough to prove that 
\begin{align}\label{limit of C}
    \lim_{t \to 0^{+}}C(q(t)) = 1.
\end{align}
If \eqref{limit of C} is not true,
then there exists a sequence $\langle q(t_n) \rangle$ such that 
\begin{align}
    \lim_{t \to 0^{+}}C(q(t_n)) = \ell \in (1, \infty].
\end{align}
For each $C(q(t_n))$, there exists $f_n \in \mathcal{O}(\mathbb{D}, \overline{D})$ such that $f_n(0) = q(t_n), \, f_n(1/C(q(t_n))) \in \overline{D \setminus W_0}$.
By using Montel's theorem, there exists $f \in \mathcal{O}(\mathbb{D}, \overline{D})$ such that 
\begin{align}
   f(0) = 0, \, \ell < \infty \text{ and } f(1 / \ell) \in \overline{D \setminus W_0}. 
\end{align}
As $f = (f_1, \dots, f_{n + 1}) : f^{-1}(V) \to \overline{D} \cap V$ is a holomorphic function and 
\begin{align*}
    \overline{D} \cap V \subset \{z \in V: \operatorname{Re}z_1 + \phi\left(|z_2|^2 + \dots + |z_{n + 1}|^2\right) \leq 0\},
\end{align*}
$f_1$ is nonconstant holomorphic function in $f^{-1}(V)$ and $f_1(0) = 0$. Then, $\operatorname{Re}f_1(z) \leq 0$ for each $z \in f^{-1}(V)$ which contradicts the open mapping theorem. 
Therefore, \eqref{limit of C} is true.
\end{proof}
We now prove the boundary behaviour of the Kobayashi metric at exponentially flat infinite type boundary points of bounded smooth domains in $\mathbb{C}^{n + 1}$.
\begin{proof}[Proof of Theorem \ref{1.5}] We will prove the theorem in two steps. We first localize the Kobayashi metric further. We then use method of scaling to prove the theorem.

\textbf{Step $1$.}
Let $\xi \in \mathbb{C}^{n + 1} \setminus \{0\}$.
Then,
\begin{align}\label{94}
        M^K_{D \cap W_t}\left(q(t); \xi\right) = M^K_{D_t \cap W}\left((-d(t), 0);
        R^2_t \circ R^1_t(\xi)\right),
\end{align} 
where $W_t = (\gamma_t \circ R_t^1 \circ T_t^1)^{-1}(W)$ and $D_t = (\gamma_t \circ R_t^1 \circ T^1_t)(D)$.

Let $ \epsilon > 0$. Recall,
\begin{align}
{D}_t^{\epsilon} := \left\{z \in W: \rho \circ \gamma_t^{-1}(z) < 0, \operatorname{Re}z_1 > -d(t)^{\frac{1}{(1+\epsilon)^2}}\right\}.
\end{align}
By Theorem \ref{Graham theorem}, we have
\begin{align}\label{A_1}  
 M_{{D}_t^{\epsilon}}^K\left((-d(t), 0); R^2_t \circ R^1_t(\xi)\right)& \nonumber\\
    \leq \operatorname{coth}&c_{D_t \cap W}\left((-d(t), 0),D_t \cap W\setminus{D}_t^\epsilon\right)M_{D_t \cap W}^K\left((-d(t), 0); R^2_t \circ R^1_t(\xi)\right).
\end{align} 
Let $h_{\epsilon} : D_t \cap W \to \mathbb{D}$ be defined by 
\begin{align*}
    h_{\epsilon}(z) = \operatorname{exp}(-(-z_1)^{\frac{1}{1 + \epsilon}}).
\end{align*}
So,
\begin{align}\label{A_2}
    c_{D_t \cap W}^*\left((-d(t),0), z\right) &\geq \left|\frac{h_{\epsilon}(-d(t), 0) - h_\epsilon (z)}{1 - h_\epsilon (-d(t), 0))\overline{h_{\epsilon}(z)}}\right| \nonumber\\
    &\geq \frac{\left|\left(|h_{\epsilon}(z)-1|- |h_{\epsilon}(-d(t), 0) - 1|\right)\right|}{|h_{\epsilon}(z)-1| + |h_{\epsilon}(-d(t), 0)- 1|}.
\end{align}
As $z \in D_t \cap W \setminus {D}_t^{\epsilon}$,
$\operatorname{Re}z_1 \leq - d(t)^{\frac{1}{1 + \epsilon}}$.
Hence,
\begin{align}
    |h_{\epsilon}(z) - 1| &\geq 1 - e^{-c_0^{\epsilon}d(t)^{\frac{1}{(1+\epsilon)^2}}}, \text{ and }\label{A_3}\\
    |1 - h_\epsilon(-d(t), 0)| &\leq d(t)^{\frac{1}{1+ \epsilon}},\label{A_4}
\end{align}
where $c_0^{\epsilon} = \operatorname{cos}\left(\frac{\pi}{2(1 + \epsilon)}\right)$. From \eqref{A_3} and \eqref{A_4}, we get
\begin{align}\label{100}
    {|h_{\epsilon}(z)-1|- |h_{\epsilon}(-d(t), 0) - 1|} \geq  1 - e^{-c^{\epsilon}_0d(t)^{\frac{1}{(1+\epsilon)^2}}} - d(t){^\frac{1}{1+ \epsilon}} > 0,
\end{align}
for all sufficiently small $t > 0$. Therefore, from \eqref{A_2} and \eqref{100}, we get
\begin{align}
     c_{D_t \cap W}^*((-d(t), 0), z)
    &\geq \frac{|h_{\epsilon}(z)-1|- |h_{\epsilon}(-d(t), 0) - 1|}{|h_{\epsilon}(z)-1| + |h_{\epsilon}(-d(t), 0)- 1|}\nonumber\\
    &\geq \frac{ 1 - e^{-c^{\epsilon}_0d(t)^{\frac{1}{(1+\epsilon)^2}}} - d(t)^{\frac{1}{1+ \epsilon}}}{ 1 - e^{-c_0^{\epsilon}d(t)^{\frac{1}{(1+\epsilon)^2}}} + d(t)^{\frac{1}{1+ \epsilon}}} \quad (\text{using \eqref{A_3} and \eqref{A_4}}),\label{101}
\end{align}
for all sufficiently small $t > 0$. Since
\begin{align}
    c_{D_t \cap W}((-d(t), 0), z) &= \operatorname{tanh}^{-1}(c_{D_t \cap W}^{*}((-d(t), 0), z)\nonumber\\
    &\geq \operatorname{tanh}^{-1}\left(\frac{ 1 - e^{-c^{\epsilon}_0d(t)^{\frac{1}{(1+\epsilon)^2}}} - d(t)^{\frac{1}{1+ \epsilon}}}{ 1 - e^{-c_0^{\epsilon}d(t)^{\frac{1}{(1+\epsilon)^2}}} + d(t)^{\frac{1}{1+ \epsilon}}}\right)\quad (\text{from \eqref{101}})\nonumber\\
    &= \frac{1}{2}\operatorname{log}\left\{\frac{1 + \frac{ 1 - e^{-c_0^{\epsilon}d(t)^{\frac{1}{(1+\epsilon)^2}}} - d(t)^{\frac{1}{1+ \epsilon}}}{ 1 - e^{-c_0^{\epsilon}d(t)^{\frac{1}{(1+\epsilon)^2}}} + d(t)^\frac{1}{1+ \epsilon}}}{1 - \frac{ 1 - e^{-c_0^{\epsilon}d(t)^{\frac{1}{(1+\epsilon)^2}}} - d(t)^\frac{1}{1+ \epsilon}}{ 1 - e^{-c_0^{\epsilon}d(t)^{\frac{1}{(1+\epsilon)^2}}} + d(t)^\frac{1}{1+ \epsilon}}}\right\}\nonumber\\
    &= \frac{1}{2}\operatorname{log}\left\{\frac{1 - e^{-c_0^{\epsilon}d(t)^{\frac{1}{(1+\epsilon)^2}}}}{d(t)^{\frac{1}{1+\epsilon}}}\right\}\nonumber,
\end{align}
for each $z \in D_t \cap W \setminus {D}_t^\epsilon$.
Hence
\begin{align}
    c_{D_t \cap W}((-d(t), 0), D_t \cap W \setminus D_t^{\epsilon}) \geq \frac{1}{2}\operatorname{log}\left\{\frac{1 - e^{-c_0^{\epsilon}d(t)^{\frac{1}{(1+\epsilon)^2}}}}{d(t)^{\frac{1}{1+\epsilon}}}\right\},
\end{align}
which implies 
\begin{align}\label{103}
    \operatorname{coth}c_{D_t \cap W}((-d(t), 0), D_t \cap W \setminus D_t^{\epsilon}) \leq \operatorname{coth}\left(\frac{1}{2}\operatorname{log}\left\{\frac{1 - e^{-c_0^{\epsilon}d(t)^{\frac{1}{(1+\epsilon)^2}}}}{d(t)^{\frac{1}{1+\epsilon}}}\right\}\right).
\end{align}
From \eqref{A_1} and \eqref{103}, we have
\begin{align}\label{104}
    \limsup_{t \to 0^{+}}\frac{M_{{D}_t^{\epsilon}}^K((-d(t), 0);R^2_t \circ R^1_t(\xi))}{M_{D_t \cap W}^K((-d(t), 0);R^2_t \circ R^1_t(\xi))} \leq 1.
\end{align}
Since Kobayashi metric is monotonically decreasing with respect to the domain and $D_t^{\epsilon} \subset D_t \cap W$, 
\begin{align}\label{105}
    \liminf_{t \to 0^{+}}\frac{M_{{D}_t^{\epsilon}}^K((-d(t), 0);R^2_t \circ R^1_t(\xi))}{M_{D_t \cap W}^K((-d(t), 0);R^2_t \circ R^1_t(\xi))} \geq 1.
\end{align}
By using \eqref{104} and \eqref{105}, we have
\begin{align}\label{loc of kobayashi}
    \lim_{t \to 0^+} \frac{M_{{D}_t^{\epsilon}}^K((-d(t), 0);R^2_t \circ R^1_t(\xi))}{M_{D_t \cap W}^K((-d(t), 0);R^2_t \circ R^1_t(\xi))} = 1.
\end{align}
Therefore, by using Lemma \ref{localization of Kobayashi}, \eqref{94} and \eqref{loc of kobayashi}, we have
\begin{align}\label{60}
    \lim_{t \to 0^{+}} \frac{M^K_D(q(t); \xi)}{M^K_{D^{\epsilon}_t}((-d(t), 0); R^2_t \circ R^1_t(\xi))} = 1,
\end{align}
for each $\epsilon > 0$. 

\textbf{Step $2$.}
Let $\xi \in \mathbb{C}^{n + 1} \setminus \{0\}$. We denote the complex normal and complex tangential components of $\xi$ with respect to $\pi(q(t))$ as $\xi_{N,t}$ and $\xi_{T,t}$, respectively. This results in the decomposition $\xi = \xi_{N,t} + \xi_{T,t}$, where $\xi_{T,t} \in T_{\pi(q(t))}^{\mathbb{C}}(bD)$ and $\xi_{N,t} \perp T_{\pi(q(t))}^{\mathbb{C}}(bD)$.

Expressing this in detail:
\begin{align}
     \xi_{N, t} &= \langle R^2_t \circ R^1_t(\xi), e_1\rangle (R_t^2 \circ R_t^1)^{-1}(e_1), \text{ and}\\
     \xi_{T,t} &= \sum_{j = 2}^{n + 1}\langle R^2_t \circ R^1_t(\xi), e_j\rangle (R_t^2 \circ R_t^1)^{-1}(e_j).
\end{align}
Furthermore, we have $|\xi_{N, t}| = |\langle R^2_t \circ R^1_t(\xi), e_1 \rangle|$, and $|\xi_{T,t}| = \sqrt{\sum_{j = 2}^{n + 1}|\langle R^2_t \circ R^1_t(\xi), e_j\rangle|^2}$.

By using Lemma \ref{ScalingLemma2}, 
for $\epsilon, \delta > 0$, there exists $t_0(\epsilon, \delta)>0$, such that
\begin{align}\label{61}
     (1 - \delta)(\mathbb{D}\times B_n(0,1)) \subset f\circ\Sigma\left({D}_t^{\epsilon}\right) \subset \mathbb{D} \times B_n(0, {d}_2^\epsilon(t)/d^*(t)),
\end{align}
for each $0 < t< t_0(\epsilon, \delta)$. 
Since
\begin{align}\label{62}
    M^K_{D_t^{\epsilon}}((-d(t), 0); R_t^2 \circ R_t^1(\xi)) = M_{f \circ \Sigma(D_t^{\epsilon})}^K(0;(f \circ \Sigma)'(-d(t), 0)R_t^2 \circ R_t^1(\xi))
\end{align}
By using \eqref{61}, we have
\begin{align}\label{63}
    M_{f \circ \Sigma(D_t^{\epsilon})}^K(0;(f \circ \Sigma)'(-d(t), 0)R_t^2 \circ R_t^1(\xi)) &\leq M^K_{(1 - \delta)(\mathbb{D} \times B_n(0, 1))}(0; (f \circ \Sigma)'(-d(t), 0)R_t^2 \circ R_t^1(\xi))\nonumber\\
    &= M_{\mathbb{D} \times B_n(0, 1)}^K\left(0; \frac{(f \circ \Sigma)'(-d(t), 0)R_t^2 \circ R_t^1(\xi)}{(1 - \delta)}\right)\nonumber\\
    &= \frac{1}{(1 - \delta)} M_{\mathbb{D} \times B_n(0, 1)}^K\left(0; (f \circ \Sigma)'(-d(t), 0)R_t^2 \circ R_t^1(\xi)\right).
\end{align}
Here
\begin{align}\label{64}
    (f \circ \Sigma)'(-d(t), 0) R_t^2 \circ R_t^1(\xi) = \langle R_t^2 \circ R_t^1 (\xi), e_1\rangle \frac{e_1}{2d(t)} + \sum_{j = 2}^{n + 1}\langle R_t^2 \circ R_t^1 (\xi), e_j\rangle \frac{e_j}{d^{*}(t)}.
\end{align}
From \eqref{62}, \eqref{63}, \eqref{64} and Lemma \ref{Kobayashi formula}, we have
\begin{align}
    \frac{M_{D_t^{\epsilon}}^K((-d(t), 0); R_t^2 \circ R_t^1(\xi))}{\operatorname{max}\left\{{|\xi_{N, t}|}/{2d(t)}, {|\xi_{T, t}|}/{d^{*}(t)}\right\}} \leq \frac{1}{1 - \delta},
\end{align}
for each $0 < t < t_0(\epsilon, \delta)$.
Hence
\begin{align}\label{66}
    \limsup_{t \to 0^{+}} \frac{M_{D_t^{\epsilon}}^K((-d(t), 0); R_t^2 \circ R_t^1(\xi))}{\operatorname{max}\left\{{|\xi_{N, t}|}/{2d(t)}, {|\xi_{T, t}|}/{d^{*}(t)}\right\}} \leq 1.
\end{align}
By using \eqref{61} and the inequality $d_2^{\epsilon}(t) \geq d^{*}(t)$ for each $0 < t < t_0(\epsilon, \delta)$ (follows from the definitions of $d_2^{\epsilon}$ and $d^{*}$), we also have 
\begin{align}\label{67}
    M_{f \circ \Sigma(D_t^{\epsilon})}^K(0;(f \circ \Sigma)'(-d(t), 0)R_t^2 \circ R_t^1(\xi)) &\geq M^K_{\mathbb{D} \times B_n(0, d_2^{\epsilon}(t)/d^{*}(t))}(0; (f \circ \Sigma)'(-d(t), 0)R_t^2 \circ R_t^1(\xi))\nonumber\\
    &\geq M^K_{\frac{d_2^{\epsilon}(t)}{d^{*}(t)}(\mathbb{D} \times B_n(0, 1))}(0; (f \circ \Sigma)'(-d(t), 0)R_t^2 \circ R_t^1(\xi)) \nonumber  \\
    &= \frac{d^{*}(t)}{d_2^{\epsilon}(t)}M_{\mathbb{D} \times B_n(0, 1)}^K\left(0; (f \circ \Sigma)'(-d(t), 0)R_t^2 \circ R_t^1(\xi)\right).
\end{align}
Again from \eqref{62}, \eqref{67}, \eqref{64} and Lemma \ref{Kobayashi formula}, we have
\begin{align}
    \frac{M_{D_t^{\epsilon}}^K((-d(t), 0); R_t^2 \circ R_t^1(\xi))}{\operatorname{max}\left\{\frac{|\xi_{N, t}|}{2d(t)}, \frac{|\xi_{T, t}|}{d^{*}(t)}\right\}} \geq \frac{d^{*}(t)}{d^{\epsilon}_2(t)},
\end{align}
for each $0 < t < t_0(\epsilon, \delta)$. We now use Lemma \ref{3.1}, we get 
\begin{align}\label{69}
    \liminf_{t \to 0^{+}}\frac{M_{D_t^{\epsilon}}^K((-d(t), 0); R_t^2 \circ R_t^1(\xi))}{\operatorname{max}\left\{\frac{|\xi_{N, t}|}{2d(t)}, \frac{|\xi_{T, t}|}{d^{*}(t)}\right\}} \geq \frac{1}{(1 + \epsilon)^{\frac{1}{m}}},
\end{align}
for each $\epsilon > 0$. 
Consider
\begin{align}\label{70}
    \frac{M_{D}^{K}(q(t); \xi)}{\operatorname{max}\{|\xi_{N, t}|/2d(t), |\xi_{T, t}|/d^{*}(t)\}} = \frac{M^K_D(q(t); \xi)}{M^K_{D^{\epsilon}_t}((-d(t), 0); R^2_t \circ R^1_t(\xi))} \frac{{M^K_{D^{\epsilon}_t}((-d(t), 0); R^2_t \circ R^1_t(\xi))}}{\operatorname{max}\{|\xi_{N, t}|/2d(t), |\xi_{T, t}|/d^{*}(t)\}}.
\end{align}
From \eqref{60}, \eqref{66} and \eqref{70}, we get
\begin{align}\label{71}
    \limsup_{t \to 0^{+}}\frac{M_{D}^{K}(q(t); \xi)}{\operatorname{max}\{|\xi_{N, t}|/2d(t), |\xi_{T, t}|/d^{*}(t)\}} \leq 1.
\end{align}
Again from \eqref{60}, \eqref{69} and \eqref{70}, we get
\begin{align}
    \liminf_{t \to 0^{+}}\frac{M_{D}^{K}(q(t); \xi)}{\operatorname{max}\{|\xi_{N, t}|/2d(t), |\xi_{T, t}|/d^{*}(t)\}} \geq \frac{1}{(1 + \epsilon)^{\frac{1}{m}}},
\end{align}
for each $\epsilon > 0$. Hence
\begin{align}\label{73}
     \liminf_{t \to 0^{+}}\frac{M_{D}^{K}(q(t); \xi)}{\operatorname{max}\{|\xi_{N, t}|/2d(t), |\xi_{T, t}|/d^{*}(t)\}} \geq 1.
\end{align}
Therefore, from \eqref{71} and \eqref{73}, we get
\begin{align*}
    \lim_{t \to 0^+} \frac{M^{K}_{D}(q(t); \xi)}{\operatorname{max}\left\{\frac{|\xi_{N, t}|}{2d(t)},\frac{|\xi_{T,t}|}{d^*(t)}\right\}} = 1,
\end{align*}
for $\xi \in \mathbb{C}^{n + 1} \setminus \{0\}$.
\end{proof}
\section*{Acknowledgements}
This work is completed during my PhD studies under the guidance of my advisor, Sivaguru Ravisankar. I am deeply grateful to him for his numerous insightful discussions, valuable suggestions, and constructive comments, all of which have greatly enhanced the quality of this work.
\def\MR#1{\relax\ifhmode\unskip\spacefactor3000 \space\fi%
  \href{http://www.ams.org/mathscinet-getitem?mr=#1}{MR#1}}
 \begin{bibdiv}
\begin{biblist}
\bib{Bergman 1970}{book}{
   author={Bergman, Stefan},
   title={The kernel function and conformal mapping},
   series={Mathematical Surveys, No. V},
   edition={Second, revised edition},
   publisher={American Mathematical Society, Providence, R.I.},
   date={1970},
   pages={x+257},
   review={\MR{0507701}},
}

\bib{Kar}{article}{
   author={Borah, Diganta},
   author={Kar, Debaprasanna},
   title={Some remarks on the Kobayashi-Fuks metric on strongly pseudoconvex
   domains},
   journal={J. Math. Anal. Appl.},
   volume={512},
   date={2022},
   number={2},
   pages={Paper No. 126162, 24},
   issn={0022-247X},
   review={\MR{4396033}},
   doi={10.1016/j.jmaa.2022.126162},
}

\bib{D'Angelo 1982}{article}{
   author={D'Angelo, John P.},
   title={Real hypersurfaces, orders of contact, and applications},
   journal={Ann. of Math. (2)},
   volume={115},
   date={1982},
   number={3},
   pages={615--637},
   issn={0003-486X},
   review={\MR{657241}},
   doi={10.2307/2007015},
}

\bib{Diederich 1970}{article}{
   author={Diederich, Klas},
   title={Das Randverhalten der Bergmanschen Kernfunktion und Metrik in
   streng pseudo-konvexen Gebieten},
   language={German},
   journal={Math. Ann.},
   volume={187},
   date={1970},
   pages={9--36},
   issn={0025-5831},
   review={\MR{262543}},
   doi={10.1007/BF01368157},
}

\bib{Fefferman 1974}{article}{
   author={Fefferman, Charles},
   title={The Bergman kernel and biholomorphic mappings of pseudoconvex
   domains},
   journal={Invent. Math.},
   volume={26},
   date={1974},
   pages={1--65},
   issn={0020-9910},
   review={\MR{350069}},
   doi={10.1007/BF01406845},
}
\bib{Fefferman 1976}{article}{
   author={Fefferman, Charles L.},
   title={Monge-Amp\`ere equations, the Bergman kernel, and geometry of
   pseudoconvex domains},
   journal={Ann. of Math. (2)},
   volume={103},
   date={1976},
   number={2},
   pages={395--416},
   issn={0003-486X},
   review={\MR{0407320}},
   doi={10.2307/1970945},
}

\bib{Fuks}{article}{
   author={Fuks, B. A.},
   title={The Ricci curvature of the Bergman metric invariant with respect
   to biholomorphic mappings},
   language={Russian},
   journal={Dokl. Akad. Nauk SSSR},
   volume={167},
   date={1966},
   pages={996--999},
   issn={0002-3264},
   review={\MR{196768}},
}

\bib{Graham 1975}{article}{
   author={Graham, Ian},
   title={Boundary behaviour of the Carath\'{e}odory and Kobayashi metrics on
   strongly pseudoconvex domains in $C^{n}$ with smooth boundary},
   journal={Trans. Amer. Math. Soc.},
   volume={207},
   date={1975},
   pages={219--240},
   issn={0002-9947},
   review={\MR{372252}},
   doi={10.2307/1997175},
}

\bib{Hachaichi}{article}{
   author={Hachaichi, R.},
   title={A biholomorphic Bergman invariant in a strictly pseudoconvex
   domain},
   conference={
      title={Complex analysis and generalized functions},
      address={Varna},
      date={1991},
   },
   book={
      publisher={Publ. House Bulgar. Acad. Sci., Sofia},
   },
   date={1993},
   pages={94--97},
   review={\MR{1275158}},
}

\bib{Hörmander 1965}{article}{
   author={H\"{o}rmander, Lars},
   title={$L^{2}$ estimates and existence theorems for the $\bar \partial
   $ operator},
   journal={Acta Math.},
   volume={113},
   date={1965},
   pages={89--152},
   issn={0001-5962},
   review={\MR{179443}},
   doi={10.1007/BF02391775},
}
	
\bib{Hörmander}{book}{
   author={H\"{o}rmander, Lars},
   title={An introduction to complex analysis in several variables},
   series={North-Holland Mathematical Library},
   volume={7},
   edition={3},
   publisher={North-Holland Publishing Co., Amsterdam},
   date={1990},
   pages={xii+254},
   isbn={0-444-88446-7},
   review={\MR{1045639}},
}

\bib{Ravi}
    {article}{
      title={Asymptotic behaviour of the Bergman kernel and metric}, 
      author={Ravi Shankar Jaiswal},
      year={2023},
      note=
      {\href{https://arxiv.org/abs/2311.01097}{	arXiv:2311.01097}}
}

\bib{Jarnicki}{book}{
   author={Jarnicki, Marek},
   author={Pflug, Peter},
   title={Invariant distances and metrics in complex analysis},
   series={De Gruyter Expositions in Mathematics},
   volume={9},
   publisher={Walter de Gruyter \& Co., Berlin},
   date={1993},
   pages={xii+408},
   isbn={3-11-013251-6},
   review={\MR{1242120}},
   doi={10.1515/9783110870312},
}

\bib{Kar 2023}
    {article}{
      title={Weighted boundary limits of the Kobayashi--Fuks metric on h-extendible domains}, 
      author={Debaprasanna Kar},
      year={2023},
      note=
      {\href{https://arxiv.org/abs/2304.05213}{	arXiv:2304.05213}}
}

\bib{Krantz-Yu 1996}{article}{
   author={Krantz, Steven G.},
   author={Yu, Jiye},
   title={On the Bergman invariant and curvatures of the Bergman metric},
   journal={Illinois J. Math.},
   volume={40},
   date={1996},
   number={2},
   pages={226--244},
   issn={0019-2082},
   review={\MR{1398092}},
}

\bib{Krantz book}{book}{
   author={Krantz, Steven G.},
   title={Function theory of several complex variables},
   note={Reprint of the 1992 edition},
   publisher={AMS Chelsea Publishing, Providence, RI},
   date={2001},
   pages={xvi+564},
   isbn={0-8218-2724-3},
   review={\MR{1846625}},
   doi={10.1090/chel/340},
}

\bib{Kobayashi}{article}{
   author={Kobayashi, Shoshichi},
   title={Geometry of bounded domains},
   journal={Trans. Amer. Math. Soc.},
   volume={92},
   date={1959},
   pages={267--290},
   issn={0002-9947},
   review={\MR{112162}},
   doi={10.2307/1993156},
}

\bib{Sunhong 2001}{article}{
   author={Lee, Sunhong},
   title={Asymptotic behaviour of the Kobayashi metric on certain
   infinite-type pseudoconvex domains in ${\bf C}^2$},
   journal={J. Math. Anal. Appl.},
   volume={256},
   date={2001},
   number={1},
   pages={190--215},
   issn={0022-247X},
   review={\MR{1820076}},
   doi={10.1006/jmaa.2000.7307},
}

\bib{Nazarjan}{article}{
   author={Nazarjan, \`E. O.},
   title={The estimation of the Ricci curvature of the Bergman metric},
   language={Russian, with English and Armenian summaries},
   journal={Izv. Akad. Nauk Armjan. SSR Ser. Mat.},
   volume={8},
   date={1973},
   number={5},
   pages={418--423, 426},
   issn={0002-3043},
   review={\MR{372257}},
}

\bib{Ramadanov 1984}{article}{
   author={Ramadanov, Ivan P.},
   title={Monge-Amp\`ere equations, the Bergman kernel and biholomorphic
   mappings},
   conference={
      title={Complex analysis and applications '81},
      address={Varna},
      date={1981},
   },
   book={
      publisher={Publ. House Bulgar. Acad. Sci., Sofia},
   },
   date={1984},
   pages={446--452},
   review={\MR{883265}},
}

\bib{Royden 1970}{article}{
   author={Royden, H. L.},
   title={Remarks on the Kobayashi metric},
   conference={
      title={Several complex variables, II},
      address={Proc. Internat. Conf., Univ. Maryland, College Park, Md.},
      date={1970},
   },
   book={
      series={Lecture Notes in Math., Vol. 185},
      publisher={Springer, Berlin},
   },
   date={1971},
   pages={125--137},
   review={\MR{0304694}},
}

\bib{Yu 1995 2}{article}{
   author={Yu, Ji Ye},
   title={Weighted boundary limits of the generalized Kobayashi-Royden
   metrics on weakly pseudoconvex domains},
   journal={Trans. Amer. Math. Soc.},
   volume={347},
   date={1995},
   number={2},
   pages={587--614},
   issn={0002-9947},
   review={\MR{1276938}},
    doi={10.2307/2154903},
}

\end{biblist}
\end{bibdiv}
\end{document}